\DeclareMathOperator{\Bi}{\textup{B}}
\DeclareMathOperator{\Coh}{\textup{Coh}}
\DeclareMathOperator{\FEt}{\textup{FÉt}}
\DeclareMathOperator{\Homsh}{\underline{\textup{Hom}}}
\DeclareMathOperator{\Isosh}{\underline{\textup{Iso}}}
\DeclareMathOperator{\QCoh}{\textup{QCoh}}
\DeclareMathOperator{\Spec}{\textup{Spec}}
\DeclareMathOperator{\et}{\textup{et}}
\theoremstyle{plain}
\newtheorem{thm}{Theorem}[section]\setcounter{thm}{0}
\newtheorem*{thm*}{Theorem}
\newtheorem{lem}[thm]{Lemma}
\newtheorem{cor}[thm]{Corollary}
\newtheorem{prop}[thm]{Proposition}
\theoremstyle{remark}
\newtheorem{rmk}[thm]{Remark} %\setcounter{rmk}{0}
\newtheorem*{rmk*}{Remark}
\theoremstyle{definition}
\newtheorem{defn}{Definition}[section] \setcounter{defn}{0}
\newtheorem*{const*}{Construction}
\newtheorem{conv}[defn]{Conventions}
\theoremstyle{plain}
\newtheorem{thmI}{Theorem}
\newtheorem{thmII}{Theorem}
\newcommand{\drin}{\mathbf D_k}
\newcommand{\Fq}{{\mathbb{F}_q}}
\newcommand{\Emb}{{\textup{Emb}}}
\newcommand{\Et}{{\rm Et}}
\newcommand{\hvec}[1]{\begin{pmatrix} #1 \end{pmatrix}}
\newcommand{\alA}{\mathcal{A}}
\newcommand{\ind}{{\text{\sf ind}\hspace{.1ex}}}
\begin{document}
\title{Drinfeld-Lau Descent over Fibered Categories}
\author{Valentina Di Proietto, Fabio Tonini, Lei Zhang}

\address{ 
Valentina Di Proietto\\
    University of Exeter\\
    College of Engeneering, Mathematics and Physical Sciences\\
Streatham Campus\\
    Exeter, EX4 4RN\\
United Kingdom }
\email{v.di-proietto@exeter.ac.uk}

\address{
Fabio Tonini\\
    Universit\'a degli Studi di Firenze\\
    Dipartimento di Matematica e Informatica 'Ulisse Dini'\\
    Viale Morgagni, 67/a\\ Firenze, 50134 Italy }
\email{fabio.tonini@unifi.it}

 \address{Lei Zhang\\
     Sun Yat-Sen University\\ School of Mathematics
     (Zhuhai)\\Zhuhai, Guangdong, P.~R.~China}
\email{cumt559@gmail.com}

%\thanks{This work was supported by the European Research Council (ERC) Advanced Grant 0419744101 and the Einstein Foundation}
\date{\today}

\global\long\def\A{\mathbb{A}}

\global\long\def\Ab{(\textup{Ab})}

\global\long\def\C{\mathbb{C}}

\global\long\def\Cat{(\textup{Cat})}

\global\long\def\Di#1{\textup{D}(#1)}

\global\long\def\E{\mathbb{E}}

\global\long\def\F{\mathbb{F}}

\global\long\def\GCov{G\textup{-Cov}}

\global\long\def\Gcat{(\textup{Galois cat})}

\global\long\def\Gfsets#1{#1\textup{-fsets}}

\global\long\def\Gm{\mathbb{G}_{m}}

\global\long\def\GrCov#1{\textup{D}(#1)\textup{-Cov}}

\global\long\def\Grp{(\textup{Grps})}

\global\long\def\Gsets#1{(#1\textup{-sets})}

\global\long\def\HCov{H\textup{-Cov}}

\global\long\def\MCov{\textup{D}(M)\textup{-Cov}}

\global\long\def\MHilb{M\textup{-Hilb}}

\global\long\def\N{\mathbb{N}}

\global\long\def\PGor{\textup{PGor}}

\global\long\def\PGrp{(\textup{Profinite Grp})}

\global\long\def\PP{\mathbb{P}}

\global\long\def\Pj{\mathbb{P}}

\global\long\def\Q{\mathbb{Q}}

\global\long\def\RCov#1{#1\textup{-Cov}}

\global\long\def\RR{\mathbb{R}}

\global\long\def\Sch{\textup{Sch}}

\global\long\def\WW{\textup{W}}

\global\long\def\Z{\mathbb{Z}}

\global\long\def\acts{\curvearrowright}

\global\long\def\alA{\mathscr{A}}

\global\long\def\alB{\mathscr{B}}

\global\long\def\arr{\longrightarrow}

\global\long\def\arrdi#1{\xlongrightarrow{#1}}

\global\long\def\catC{\mathscr{C}}

\global\long\def\catD{\mathscr{D}}

\global\long\def\catF{\mathscr{F}}

\global\long\def\catG{\mathscr{G}}

\global\long\def\comma{,\ }

\global\long\def\covU{\mathcal{U}}

\global\long\def\covV{\mathcal{V}}

\global\long\def\covW{\mathcal{W}}

\global\long\def\duale#1{{#1}^{\vee}}

\global\long\def\fasc#1{\widetilde{#1}}

\global\long\def\fsets{(\textup{f-sets})}

\global\long\def\iL{r\mathscr{L}}

\global\long\def\id{\textup{id}}

\global\long\def\la{\langle}

\global\long\def\odi#1{\mathcal{O}_{#1}}

\global\long\def\ra{\rangle}

\global\long\def\set{(\textup{Sets})}

\global\long\def\sets{(\textup{Sets})}

\global\long\def\shA{\mathcal{A}}

\global\long\def\shB{\mathcal{B}}

\global\long\def\shC{\mathcal{C}}

\global\long\def\shD{\mathcal{D}}

\global\long\def\shE{\mathcal{E}}

\global\long\def\shF{\mathcal{F}}

\global\long\def\shG{\mathcal{G}}

\global\long\def\shH{\mathcal{H}}

\global\long\def\shI{\mathcal{I}}

\global\long\def\shJ{\mathcal{J}}

\global\long\def\shK{\mathcal{K}}

\global\long\def\shL{\mathcal{L}}

\global\long\def\shM{\mathcal{M}}

\global\long\def\shN{\mathcal{N}}

\global\long\def\shO{\mathcal{O}}

\global\long\def\shP{\mathcal{P}}

\global\long\def\shQ{\mathcal{Q}}

\global\long\def\shR{\mathcal{R}}

\global\long\def\shS{\mathcal{S}}

\global\long\def\shT{\mathcal{T}}

\global\long\def\shU{\mathcal{U}}

\global\long\def\shV{\mathcal{V}}

\global\long\def\shW{\mathcal{W}}

\global\long\def\shX{\mathcal{X}}

\global\long\def\shY{\mathcal{Y}}

\global\long\def\shZ{\mathcal{Z}}

\global\long\def\st{\ | \ }

\global\long\def\stA{\mathcal{A}}

\global\long\def\stB{\mathcal{B}}

\global\long\def\stC{\mathcal{C}}

\global\long\def\stD{\mathcal{D}}

\global\long\def\stE{\mathcal{E}}

\global\long\def\stF{\mathcal{F}}

\global\long\def\stG{\mathcal{G}}

\global\long\def\stH{\mathcal{H}}

\global\long\def\stI{\mathcal{I}}

\global\long\def\stJ{\mathcal{J}}

\global\long\def\stK{\mathcal{K}}

\global\long\def\stL{\mathcal{L}}

\global\long\def\stM{\mathcal{M}}

\global\long\def\stN{\mathcal{N}}

\global\long\def\stO{\mathcal{O}}

\global\long\def\stP{\mathcal{P}}

\global\long\def\stQ{\mathcal{Q}}

\global\long\def\stR{\mathcal{R}}

\global\long\def\stS{\mathcal{S}}

\global\long\def\stT{\mathcal{T}}

\global\long\def\stU{\mathcal{U}}

\global\long\def\stV{\mathcal{V}}

\global\long\def\stW{\mathcal{W}}

\global\long\def\stX{\mathcal{X}}

\global\long\def\stY{\mathcal{Y}}

\global\long\def\stZ{\mathcal{Z}}

\global\long\def\then{\ \Longrightarrow\ }

\global\long\def\L{\textup{L}}

\global\long\def\l{\textup{l}}

%-------------------------------------------
% Sonderbuchstaben mit Doppellinie
\newcommand{\B}{{\mathbb B}}
\newcommand{\D}{{\mathbb D}}
\newcommand{\G}{{\mathbb G}}
\renewcommand{\H}{{\mathbb H}}
\newcommand{\I}{{\mathbb I}}
\newcommand{\J}{{\mathbb J}}
\newcommand{\M}{{\mathbb M}}
\renewcommand{\P}{{\mathbb P}}
\newcommand{\R}{{\mathbb R}}
\newcommand{\T}{{\mathbb T}}
\newcommand{\U}{{\mathbb U}}
\newcommand{\V}{{\mathbb V}}
\newcommand{\W}{{\mathbb W}}
\newcommand{\X}{{\mathbb X}}
\newcommand{\Y}{{\mathbb Y}}

%Skriptbuchstaben
\newcommand{\sA}{{\mathcal A}}
\newcommand{\sB}{{\mathcal B}}
\newcommand{\sC}{{\mathcal C}}
\newcommand{\sD}{{\mathcal D}}
\newcommand{\sE}{{\mathcal E}}
\newcommand{\sF}{{\mathcal F}}
\newcommand{\sG}{{\mathcal G}}
\newcommand{\sH}{{\mathcal H}}
\newcommand{\sI}{{\mathcal I}}
\newcommand{\sJ}{{\mathcal J}}
\newcommand{\sK}{{\mathcal K}}
\newcommand{\sL}{{\mathcal L}}
\newcommand{\sM}{{\mathcal M}}
\newcommand{\sN}{{\mathcal N}}
\newcommand{\sO}{{\mathcal O}}
\newcommand{\sP}{{\mathcal P}}
\newcommand{\sQ}{{\mathcal Q}}
\newcommand{\sR}{{\mathcal R}}
\newcommand{\sS}{{\mathcal S}}
\newcommand{\sT}{{\mathcal T}}
\newcommand{\sU}{{\mathcal U}}
\newcommand{\sV}{{\mathcal V}}
\newcommand{\sW}{{\mathcal W}}
\newcommand{\sX}{{\mathcal X}}
\newcommand{\sY}{{\mathcal Y}}
\newcommand{\sZ}{{\mathcal Z}}

%-----------------------------------------------

\newcommand{\Aff}{{\rm Aff}}
\newcommand{\Aut}{{\rm Aut}}
\newcommand{\an}{{\rm an}}
\newcommand{\Bd}{{\rm Band}}
\newcommand{\Cats}{{\rm Cats}}
\newcommand{\ch}{\textup{Ch}}
\newcommand{\Char}{{\rm char}}
\newcommand{\codim}{{\rm codim}}
\newcommand{\cont}{{\rm cont}}
\newcommand{\Cov}{\textup{Cov}}
\newcommand{\Crys}{{\rm Crys}}
\newcommand{\cts}{\textup{cts}}
\newcommand{\Div}{{\rm Div}}
\newcommand{\Dmod}{{\rm Dmod}}
\newcommand{\ECov}{{\rm ECov}}
\newcommand{\ed}{{\rm ed}}
\newcommand{\Ess}{{\rm EFin}}
\renewcommand{\et}{\textup{\'et}}
\newcommand{\ev}{\textup{ev}}
\newcommand{\Fdiv}{{\rm Fdiv}}
\newcommand{\Fib}{{\rm Fib}}
\newcommand{\FSet}{{\rm FSet}}
\newcommand{\FtAff}{{\rm FtAff}}
\newcommand{\Gal}{{\rm Gal}}
\newcommand{\height}{\textup{ht}}
\newcommand{\Hom}{{\rm Hom}}
\newcommand{\iinf}{\textup{inf}}
\newcommand{\im}{{\rm im}}
\newcommand{\Ker}{{\rm Ker}}
\newcommand{\LL}{\textup{L}}
\newcommand{\Loc}{{\rm Loc}}
\newcommand{\Max}{{\rm Max \ }}
\newcommand{\MIC}{\mbox{MIC}}
\newcommand{\Min}{{\rm Min \ }}
\newcommand{\NN}{\textup{N}}
\newcommand{\Mod}{\text{\sf Mod}}
\newcommand{\Noohi}{\textup{Noohi}}
\newcommand{\perf}{\textup{perf}}
\newcommand{\pet}{{\textup{proét}}}
\newcommand{\Pic}{{\rm Pic}}
\newcommand{\Rep}{\text{\sf Rep}}
\newcommand{\Res}{{\rm Res}}
\newcommand{\rank}{{\rm rank}}
\newcommand{\red}{{\rm red}}
\newcommand{\Sets}{\textup{Sets}}
\newcommand{\Spf}{\textup{Spf}}
\newcommand{\spe}{\textup{sp}}
\newcommand{\str}{\textup{str}}
\newcommand{\strat}{{\rm Str}}
\newcommand{\sym}{\text{Sym}}
\newcommand{\tp}{{\rm top}}
\newcommand{\Tr}{{\rm Tr}}
\newcommand{\trace}{{\rm Tr}}
\newcommand{\vect}{\text{\sf vect}}
\newcommand{\Vect}{\text{\sf Vect}}

\makeatletter 
\providecommand\@dotsep{5} 
\makeatother 
%\listoftodos\relax

\setcounter{section}{0}
\maketitle

\begin{abstract} Let $X$ be a projective scheme over $\F_q$. Drinfeld-Lau
    descent asserts that for any algebraically closed field $k$ containing
    $\F_q$, the category of coherent sheaves on $X$ is equivalent to the
    category of coherent sheaves on $X_k\coloneqq X\times_{\F_q}k$ equipped
    with a Frobenius twist (the descent data). In this paper, we first
    generalize this descent theory to some general fibered
    categories (denoted by $\sX$), which includes proper algebraic stacks and affine gerbes.
    Then we replace the ``stack of coherent sheaves'' by some other stacks (denoted by
    $\sM$). In
    this case, the descent functor becomes the  pullback functor from $\sX$-sections of $\sM$ to
    $\sX_k$-sections of $\sM$ equipped with descent data.  We study when the
    pullback functor is faithful, fully faithful or an equivalence for
    different choices of $\sM$, and show that the pullback is an equivalence for $\sM$ being the stack of immersions, the stack of étale separated and quasi-compact maps, or any quasi-separated Deligne-Mumford stack with separated diagonal.
\end{abstract}

\section{Introduction}
We work over a finite field $\Fq$ of characteristic $p$, where
$q$ is a power of $p$ and fix an algebraically closed field $k$
over $\Fq$. We denote by $\phi_k\colon k\to k$ the power of the
absolute Frobenius corresponding to $q$, that is $a\mapsto a^q$. 
If $\stX$ is a scheme (or more generally a category fibered in groupoids) over $\Fq$ we set $\stX_k\coloneq \stX\times_\Fq k$, and, by abuse of notation,  denote by
$\phi_k$ also the arithmetic Frobenius of $\stX_k$, \emph{i.e.}
$\id_\stX\times \phi_k$.  

A very classical result of Lang asserts that the category $\Vect(\Fq)$ of finite $\Fq$-vector spaces is equivalent to the category $\Vect(k)^{\phi_k}$ of finite $k$-vector spaces with a $\phi_k$-linear automorphism.
Lang's result was then generalized to projective schemes:
\begin{thm}[Drinfeld-Lau descent \textup{\cite[Lemma 8.1.1]{Lau07}, \cite[Lemma 3]{Laf97}, \cite[Lemma 4.2.2]{Ked17}}]\label{Drinfeld-Lau Descent}
 If $\stX$ is a projective scheme over $\Fq$ then the pullback
along the projection $\stX_k\arr \stX$ induces an equivalence \[
   \Coh(\stX)\arr\Coh(\stX_k)^{\phi_k}
\]
from the category
of coherent sheaves on  $\stX$ to the category of coherent sheaves $\shF$ on $\stX_k$ equipped with an isomorphism $\phi_k^* \shF \to \shF$. 
\end{thm}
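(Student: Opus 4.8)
The plan is to realize $\stX$ as $\Proj S$ for a finitely generated graded $\Fq$-algebra $S$ generated in degree $1$ with $S_0=\Fq$, so that $\stX_k=\Proj S_k$ with $S_k=S\otimes_\Fq k$ and $\phi_k$ corresponds to $\id_S\otimes\phi_k$; the pullback $p\colon\stX_k\to\stX$ is then the faithfully flat base change of $\Spec k\to\Spec\Fq$, with its canonical descent datum on every $p^*\shF$ coming from $p\circ\phi_k=p$. I will treat full faithfulness and essential surjectivity separately, reducing both to the linear-algebra statement of Lang recalled above (Theorem~\ref{thm:case of base field F}, the case $\stX=\Spec\Fq$), which is the only genuinely nontrivial input: since $k/\Fq$ is not algebraic, ordinary Galois descent is unavailable and the single automorphism $\phi_k$ must carry all the descent information.

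For full faithfulness, fix $\shF,\shG\in\Coh(\stX)$. As $\stX$ is proper and $\shF$ is finitely presented, $\Hom_\stX(\shF,\shG)=H^0(\stX,\Homsh_\stX(\shF,\shG))$; flat base change along $p$ gives $p^*\Homsh_\stX(\shF,\shG)\cong\Homsh_{\stX_k}(p^*\shF,p^*\shG)$ together with $H^0(\stX_k,p^*\shE)\cong H^0(\stX,\shE)\otimes_\Fq k$, whence $\Hom_{\stX_k}(p^*\shF,p^*\shG)\cong\Hom_\stX(\shF,\shG)\otimes_\Fq k$. Under this identification the action whose fixed points are the $\phi_k$-equivariant homomorphisms is $\id\otimes\phi_k$; since $\phi_k$ is $\Fq$-linear with $k^{\phi_k}=\Fq$, its invariants are exactly $\Hom_\stX(\shF,\shG)\otimes 1$, which is the assertion.

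The substance is essential surjectivity. Given a coherent sheaf $\shF$ on $\stX_k$ with an isomorphism $\psi\colon\phi_k^*\shF\to\shF$, set $M=\bigoplus_{n\ge n_0}H^0(\stX_k,\shF(n))$ for $n_0\gg0$; by Serre's theorems this is a finitely generated graded $S_k$-module with $\widetilde M=\shF$. Because $\shO_{\stX_k}(1)$ is pulled back from $\stX$ it carries a canonical $\phi_k$-equivariant structure, so composing the semilinear pullback along the automorphism $\phi_k$ with $H^0(\psi(n))$ equips each finite-dimensional $k$-vector space $M_n$ with a $\phi_k$-semilinear automorphism $\Phi_n$, compatibly with multiplication by $S_k$. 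Applying Lang's theorem graded piece by piece, the fixed spaces $V_n:=M_n^{\Phi_n}$ are $\Fq$-vector spaces with $V_n\otimes_\Fq k\xrightarrow{\sim}M_n$; since $S=S_k^{\id_S\otimes\phi_k}$, multiplication by $S$ preserves fixed subspaces, so $V:=\bigoplus_n V_n$ is a graded $S$-module with $V\otimes_\Fq k\cong M$ as graded $S_k$-modules. Finite generation of $V$ descends from that of $M$ by faithful flatness, so $\shG:=\widetilde V\in\Coh(\stX)$ satisfies $p^*\shG\cong\widetilde{V\otimes_\Fq k}=\widetilde M=\shF$, and the canonical descent datum on $p^*\shG$ matches $\psi$ since both induce $\Phi_n$ on each $M_n$ by construction.

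The main obstacle is this essential surjectivity step, and within it the task of packaging the sheaf isomorphism $\psi$ into a graded, multiplication-compatible family of $\phi_k$-semilinear automorphisms of the modules $M_n$, so that Lang's theorem can be applied termwise; once this is done, the descent of finite generation and the matching of descent data are routine consequences of faithfully flat descent and Serre's correspondence.
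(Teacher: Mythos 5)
Your proof is correct, and it is essentially Drinfeld's original argument for the projective case (the one in the cited sources), whereas the paper does not reprove the statement in this form: it deduces it as the special case of Theorem \ref{thm:the proper case2}, (DL), whose proof deliberately avoids the Serre correspondence. The common core is identical --- encode the sheaf $\shF$ on $\stX_k$ by a family of finite-dimensional $k$-vector spaces carrying $\phi_k$-semilinear automorphisms, apply Lang's theorem (Theorem \ref{thm:case of base field F}) to each member of the family, and check that the fixed subspaces reassemble into the same kind of algebraic structure over $\Fq$. The difference is the encoding: you use the graded module $\bigoplus_n \Hl^0(\stX_k,\shF(n))$ over $S_k=S\otimes_\Fq k$ and sheafify on $\Proj$, which requires an ample line bundle pulled back from $\stX$ and hence is confined to (quasi-)projective $\stX$; the paper instead uses the linear functor $\shE\mapsto \Hom_{\stX_k}(\shE\otimes_\Fq k,\shF)$ on a generating subcategory $\shC\subseteq\QCoh_f(\stX)$ together with the sheafification result of \cite{Ton20} (Proposition \ref{sheafification of linear functors}), which plays exactly the role of Serre's theorem but applies to any quasi-compact fibered category satisfying the finiteness hypotheses of Theorem \ref{special X} (proper algebraic stacks, affine gerbes, etc.). So your route is more elementary and self-contained for the projective case, at the cost of generality; in particular your multiplication-compatibility check ($\Phi_{n+m}(s\cdot m)=\phi_k(s)\cdot\Phi_n(m)$, so that $S=S_k^{\phi_k}$ preserves the fixed spaces) is the graded-module shadow of the paper's observation that $\Gamma_{\phi_k^*\shF}=\phi_k^*\Gamma_\shF$ naturally in $\shE\in\shC$. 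All the individual steps you give (flat base change for $\Homsh$ and $\Hl^0$ in the full faithfulness argument, descent of finite generation by faithful flatness, matching of descent data via the induced maps on each $M_n$) are sound.
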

We refer to the previous result as to the Drinfeld-Lau descent. Drinfeld-Lau descent allows to pass informations from $\stX_k$ on $\stX$. The
purpose of this paper is to extend this form of descent phenomenon in several directions
but with the same spirit: interpret objects over $\stX$ as objects over
$\stX_k$ with a ``$\phi_k$-action''.
The more general setting in which we can formulate it is the following. 
\begin{itemize}
 \item As $\stX$ we  consider a category fibered in groupoids over $\Fq$, for instance general $\Fq$-schemes.
 \item The objects over $\stX$ are interpreted as objects of a category
     $\stM(\stX)$, where $\stM$ is a stack (for convenience in the étale
     topology) not necessarily in groupoids over $\Fq$. When $\stX$ is a category fibered in groupoids an object of $\stM(\stX)$ can be simply thought of as a base preserving functor $\stX \to \stM$.
 \item If $u\in \stM(\stX_k)$ then a $\phi_k$-action is an isomorphism $f\colon \phi_k^*(u)\to u$ in $\stM(\stX_k)$. A pair $(u,f)$ as before can  also be interpreted as a $2$-commutative diagram
     \begin{equation}\label{2-diagram}
\begin{tikzpicture}[xscale=2.0,yscale=-0.6,baseline=(current  bounding  box.center)]
  \node (A0_0) at (0, 0) {$\stX_k$};
  \node (A1_1) at (1, 1) {$\stM$};
  \node (A2_0) at (0, 2) {$\stX_k$};
  \path (A2_0) edge [->]node [auto,swap] {$\scriptstyle{u}$} (A1_1);
  \path (A0_0) edge [->]node [auto,swap] {$\scriptstyle{\phi_k}$} (A2_0);
  \path (A0_0) edge [->]node [auto] {$\scriptstyle{u}$} (A1_1);
\end{tikzpicture}
\end{equation}
We denote by $\stM(\drin(\stX))$ the category of those pairs $(u,f)$. (The
$\drin$ is for Drinfeld, while the notation $\stM(\drin(\sX))$ is used because
$\drin(\sX)$ is actually a quotient stack $[\stX_k/\Z]$, see \S \ref{sec: general remarks} for details.) 
\end{itemize}

Given $\stX$ and $\stM$ as before there is a well defined functor
\begin{equation}\label{the descent functor}
\alpha_{\stM,\stX} \colon \stM(\stX)\to \stM(\drin(\stX))\tag{$\star$}
\end{equation}
and the aim of this paper is to study this functor for different choices of $\sX$ and $\sM$. In Theorem \ref{Drinfeld-Lau Descent}, $\stM$ has taken to be the stack (not in groupoids) of finitely presented quasi-coherent sheaves, which we denote by $\QCoh_f$, while $\stX$ is a projective scheme over $\Fq$.
For non-proper schemes, the map
\(
\alpha_{\QCoh_f,\stX}\colon \QCoh_f(\stX)\to \QCoh_f(\drin (\stX)) 
\)
need not to be an equivalence. For example, if $\sX=\G_{m,\F_q}=\Spec(\F_q[T,\frac{1}{T}])$, then the pair $(\sO_{\sX_k},\sO_{\sX_k}\xrightarrow{\bullet\ T}\sO_{\sX_k})$ defines an element in $\QCoh_f(\drin(\sX))$, which is clearly not from $\QCoh_f(\sX)$.  We show that Drinfeld-Lau descent still works if $\stX$ is either a proper algebraic stack (e.g. a proper scheme) or an affine gerbe over $\Fq$. Actually, much more is true:

 \begin{thmI}[cf.~Theorem \ref{thm:the proper case2}]\label{special X}
     Let $\stX$ be a quasi-compact category fibered in groupoid over $\Fq$ {\rm(cf.~\S\ref{notation}~(2))}. Suppose that:
     \begin{enumerate}[label={\rm(\alph*)}]
       \item for all $\shF,\shG\in \QCoh_f(\stX_k)$, the $k$-vector space
    $\Hom_{\stX_k}(\shF,\shG)$ has finite dimension;
\item any quasi-coherent sheaf on $\stX$ is a quotient of a direct sum of sheaves in $\QCoh_f(\stX)$,
\end{enumerate}
     then the functor
     \(\alpha_{\stM,\stX}\) in \eqref{the descent functor}
is an equivalence in the following cases:
\begin{enumerate}[label={\rm (\arabic*)}]
    \item Drinfeld-Lau descent over fibered categories: $\stM=\QCoh_f$;
    \item $\stM$ is a quasi-compact algebraic stack with quasi-affine diagonal and there exists a representable fpqc covering $V\to \stX_k$ from a Noetherian scheme {\rm(}e.g. $\stX$ is of finite type over $\Fq$ or an affine gerbe{\rm)};
\item $\sM$ is a Noetherian algebraic stack with
quasi-affine diagonal;
\item $\sM$ is an affine gerbe over a
field.
\end{enumerate}
\end{thmI}

The condition (a) of Theorem \ref{special X} is similar to asking that $\sX_k$ is a pseudo-proper fibered category (cf.~\cite[Def. 7.1]{BV15}), however, these two conditions are different. The condition (b) is satisfied by quasi-compact and quasi-separated (qcqs) Deligne-Mumford stacks (cf.~\cite[Theorem A]{Rydh15}), Noetherian % (cf.~\cite[\href{https://stacks.math.columbia.edu/tag/050Z}{050Z}]{stacks-project})
algebraic stacks (cf.~\cite[Prop. 15.4]{LMB00}, \cite{Rydh16}), and affine gerbes over $\F_q$.

Theorem \ref{special X}, (2), (3) and (4) are a consequence of Theorem
\ref{special X}, (1). The proof of this last result follows closely the
proof in \cite[Lemma 3]{Laf97}, which was, in the projective case, to interpret sheaves on $\stX_k$ as graded modules. The new idea is to replace sheafification of graded modules with a more general construction described in \cite{Ton20}.   

We also consider a variation of Drinfeld-Lau descent, asking for which kind of stacks $\stM$ the functor $\alpha_{\stM,X}$ is an equivalence for \emph{any} category fibered in groupoids $\stX$. 

\begin{thmII}\label{special M}\label{etale separated thm}
 Let $\stX$ be a category fibered in groupoids over $\Fq$ and let
 $\stM$ be a stack {\rm(}not necessarily in groupoids{\rm)} in the \'etale
 topology.
 \begin{enumerate}[label={\rm(\arabic*)}]
     \item The functor $\alpha_{\sM,\sX}$ is always faithful {\rm(cf.~Prop. \ref{main functor is faithful})}.
  \item If the $\Hom$-sheaves of $\stM$ are
 quasi-separated algebraic spaces then the functor
 \(
\alpha_{\stM,\stX}
\)
is fully faithful.\end{enumerate}
The functor \(
\alpha_{\stM,\stX}
\) is an equivalence when $\sM$ is one of the following stacks:
\begin{enumerate}[label={\rm(\Alph*)}]
    \item  a quasi-separated Deligne-Mumford stack with separated diagonal {\rm (}e.g. a
quasi-separated algebraic space{\rm )};
\item  the fppf stack $\Emb$ of embeddings {\rm (cf.~Theorem \ref{the case of closed immersion})};
\item the fpqc stack $\Et_s$ of
\'etale, quasi-compact and separated maps.
\end{enumerate}
\end{thmII}
Theorem \ref{special M}, (A) is not true if the stack $\sM$ is
algebraic but not Deligne-Mumford -- we have seen that, when
$\sX=\G_{m,\F_q}$ and $\sM=\Bi \G_{m,\F_q}$, the functor $\alpha_{\stM,\stX}$
is not an equivalence -- nor  
 if $\stM$ is Deligne-Mumford but not quasi-separated (e.g. if
 $\sX=\Spec(\F_q)$ and $\sM=\Bi \Z$, then it is easy to see that the
 $\Z$-torsor $\Spec(k)\to\drin(\F_q)=[k/\Z]$ does not come from a $\Z$-torsor
 $P\to \Spec(\F_q)$, because otherwise $P$ would be quasi-compact).
  Theorem \ref{etale separated thm}, (C) was known for the stack of finite \'etale
maps (cf.~\cite[Lemma 4.2.6]{Ked17}). 

Let us also briefly mention some
applications of our results.

\begin{itemize}
    \item As a consequence of Theorem \ref{special X}, (3) we show -- in Theorem \ref{affine gerbes are trivial} -- that any affine gerbe over $\F_q$ is trivial. This is previous known only for affine gerbes of finite type.
    \item We generalize Drinfeld's lemma (cf.~\cite[Theorem 2.1]{Dri80}, \cite[IV.2, Theorem 4]{Laf97}, \cite[Theorem 8.1.4]{Lau07}, \cite[Theorem 4.2.12]{Ked17}, \cite[Theorem 16.2.4]{SW20}) to connected algebraic stacks. The details of this will be carried out in a separated paper.
    \item We show by an example that the pro-étale fundamental group (resp. the geometric cover) does not satisfy Drinfeld's lemma (resp. Drinfeld-Lau descent) in a naive way (cf.~\S \ref{pro-étale}). 
\end{itemize}

\section{Notation}
\label{notation}
\begin{enumerate}
    \item By a stack over a ring $R$ we mean a stack (not
        necessarily in groupoids) for the fpqc topology over
        $\Sch/R$. If we want to consider a stack in a different topology we will specify the topology.

    \item We call a  category $\sX$ fibered in groupoids over a ring $R$
        \textit{quasi-compact} if it  admits a
        representable fpqc covering from an affine scheme.

    \item We call  category $\sX$ fibered in groupoids over a ring $R$ 
        \textit{quasi-separated} if the diagonal map
    is representable and quasi-compact.
    \item By \cite[3.1.2-3.1.3]{Vis05} there is an 2-equivalence between
        fibered categories and pseudo-functors. We will use these two notions interchangeably in
        this paper.
\end{enumerate}
\section{General Remarks}
\label{sec: general remarks}
Let $\stX$ be a category fibered in groupoids over $\Fq$ and $\stM$ be a stack in the \'etale topology over $\Fq$. In this section we reinterpret the map 
\[
\alpha_{\stM,\stX}\colon \stM(\stX)\to \stM(\drin(\stX))
\]
We define
\[
\drin(\Fq) = [\Spec k/\underline \Z]\text{\hspace{20pt}
    and\hspace{20pt}
}\drin(\stX)\coloneq\drin(\Fq)\times_\Fq \stX
\]
for a fibered category $\stX$ over $\Fq$, where $\underline \Z$ is the constant group scheme over $\Fq$ of $\Z$. The action of $\Z$ on $\Spec k$ is the one induced by the Frobenius $\phi_k$. Since  
\[
\underline \Z = \bigsqcup_{n\in \Z} \Spec \Fq \to \Spec \Fq
\]
is a covering in the \'etale (actually Zariski) topology, a $\underline
\Z$-torsor $P\to X$ for the fpqc topology over a scheme $X$ is automatically a
$\underline \Z$-torsors for the \'etale topology: $P$ become trivial after the
\'etale covering $P\to X$. This means that the quotient $[\Spec k/\underline
\Z]$ made with respect to the \'etale topology is a stack in the fpqc topology,
or, in other words, it coincides with quotient made with respect to the fpqc topology.  

One can check easily that the action of $\phi_k$ on $\Spec k$ is
free, so that $\drin(\Fq)$ is actually a sheaf (in the fpqc
topology). It is an algebraic space in the sense of
\cite[\href{https://stacks.math.columbia.edu/tag/025X}{025Y}]{stacks-project}, but not in the sense of
\cite[Definition 4.1]{LMB00}, because $\drin(\Fq)$ is not quasi-separated.

By definition of $\drin(\stX)$, the map $\stX_k\to \drin(\stX)$ is a $\underline \Z$-torsor and the corresponding action of $\Z$ on $\stX_k$ is given by the geometric Frobenius $\phi_k$.
If $\stX$ is just a sheaf in the \'etale topology we can conclude that $\drin(\stX)=[\stX_k/\Z]$ is the stack quotient for the \'etale topology.

In general there is a $2$-commutative diagram
\[
\begin{tikzpicture}[xscale=2.0,yscale=-0.6]
  \node (A0_0) at (0, 0) {$\stX_k$};
  \node (A1_1) at (1, 1) {$\drin(\stX)$};
  \node (A2_0) at (0, 2) {$\stX_k$};
  \path (A2_0) edge [->]node [auto,swap] {$\scriptstyle{u}$} (A1_1);
  \path (A0_0) edge [->]node [auto,swap] {$\scriptstyle{\phi_k}$} (A2_0);
  \path (A0_0) edge [->]node [auto] {$\scriptstyle{u}$} (A1_1);
\end{tikzpicture}
\]
and it induces a functor
\[
\Hom_\Fq(\drin(\stX),\stM) \to \stM(\drin(\stX))
\]
\begin{lem}
If $\stM$ is a stack in the \'etale topology then the above
functor is an equivalence.
\end{lem}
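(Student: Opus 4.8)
The plan is to exploit that the projection $\pi\colon\stX_k\to\drin(\stX)$ is a $\underline{\Z}$-torsor, hence an étale covering, and to compute $\Hom_\Fq(\drin(\stX),\stM)$ by descent along $\pi$, using that $\stM$ is a stack for the étale topology. A base-preserving functor $\drin(\stX)\to\stM$ then corresponds to an object $u\in\stM(\stX_k)$ equipped with descent data, and the content of the lemma is that such descent data amounts to exactly one isomorphism $\phi_k^*u\to u$, that is, to a pair $(u,f)$ as in the definition of $\stM(\drin(\stX))$.

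First I would spell out the fibre product. Because $\pi$ is a $\underline{\Z}$-torsor,
\[
\stX_k\times_{\drin(\stX)}\stX_k\;\cong\;\stX_k\times_\Fq\underline{\Z}\;=\;\bigsqcup_{n\in\Z}\stX_k,
\]
and under this identification the two projections to $\stX_k$ are, on the $n$-th component, the identity and the $n$-th power of Frobenius $\phi_k^n$ (this is the translation of the action recorded earlier in this section). As $\stM$ is an étale stack and $\pi$ an étale covering, descent along $\pi$ yields an equivalence between $\Hom_\Fq(\drin(\stX),\stM)$ and the category of pairs $(u,\theta)$ with $u\in\stM(\stX_k)$ and $\theta$ an isomorphism between the two pullbacks of $u$ along these projections, subject to the cocycle condition on the triple fibre product.

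The heart of the argument is to collapse this descent datum using the group structure of $\underline{\Z}$. Restricting $\theta$ to the $n$-th component yields an isomorphism $\theta_n\colon(\phi_k^n)^*u\to u$, and the cocycle condition reads $\theta_{m+n}=\theta_m\circ(\phi_k^m)^*\theta_n$. Since $\Z$ is free on the generator $1$, this system is determined by the single isomorphism $f\coloneq\theta_1\colon\phi_k^*u\to u$, with no remaining constraint: one recovers $\theta_0=\id$, $\theta_n=f\circ\phi_k^*f\circ\cdots\circ(\phi_k^{n-1})^*f$ for $n>0$, and the evident inverses for $n<0$, and conversely every $f$ produces a unique such cocycle. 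Hence the descent category is equivalent to the category of pairs $(u,f)$, which is exactly $\stM(\drin(\stX))$; tracking morphisms through the same dictionary shows the functor of the statement realises this equivalence.

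I expect the only genuine difficulty to be bookkeeping rather than conceptual: matching the two projections on $\stX_k\times_{\drin(\stX)}\stX_k$ with $\id$ and the powers of $\phi_k$ in the correct direction, and checking that freeness of $\Z$ renders the cocycle condition vacuous without inverse or direction errors in the composite $\theta_n=f\circ\phi_k^*f\circ\cdots\circ(\phi_k^{n-1})^*f$. One should also confirm that descent along $\pi$ is legitimate even though $\stX$ is merely a category fibered in groupoids: this holds because $\drin(\stX)=\drin(\Fq)\times_\Fq\stX$, so $\pi$ is the base change of the étale covering $\Spec k\to\drin(\Fq)$ and remains an étale covering of fibered categories, along which the étale stack $\stM$ descends.
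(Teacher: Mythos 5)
Your proof is correct and its mathematical core coincides with the paper's: both arguments rest on the fact that $\stX_k\to\drin(\stX)$ is a $\underline{\Z}$-torsor, so that a descent datum along it is a family $\theta_n\colon(\phi_k^n)^*u\to u$ satisfying a cocycle condition which, because $\Z$ is free on one generator, is equivalent to the single isomorphism $f=\theta_1$. The only organizational difference is how the general fibered category $\stX$ is handled. The paper first reduces to $\stX=\Spec\Fq$ via the adjunction $\Hom_{\Fq}(\drin(\Fq)\times_\Fq\stX,\stM)\simeq\Hom_{\Fq}(\drin(\Fq),\Homsh_{\Fq}(\stX,\stM))$ (after replacing $\stX$ by its stackification), and then invokes the universal property of $[\Spec k/\underline{\Z}]$, so the descent argument only ever takes place along the honest \'etale covering $\Spec k\to\drin(\Fq)$ of sheaves. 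You instead run descent directly along $\stX_k\to\drin(\stX)$ for arbitrary $\stX$; this works, but the point you flag at the end --- that $\drin(\stX)$ really is the \'etale stack quotient of the groupoid $\stX_k\times_\Fq\underline{\Z}\rightrightarrows\stX_k$ when $\stX$ is merely a category fibered in groupoids --- is exactly the step the paper's reduction is designed to avoid having to justify, and you should either carry out that verification or perform the same reduction. Everything else (the identification of the two projections with $\id$ and $\phi_k^n$, the vacuity of the cocycle condition) matches the paper.
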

\begin{proof}
    For each $T\in \Sch/\F_q$ and each section $T\xrightarrow{x}
    \drin(\stX)$ the pullback $T'\to T$ of $\sX_k\to\drin(\sX)$
    along $x$ is a $\Z$-torsor, \emph{i.e.} $T=[T'/\Z]$. Given a diagram as
    in \eqref{2-diagram}, the universality of
    étale quotients then provides a unique (up to a unique isomorphism) 1-morphism
    $T\xrightarrow{y}\sM$. This defines the
    quasi-inverse functor
    $\stM(\drin(\stX))\to\Hom_\Fq(\drin(\stX),\stM)$.
\end{proof}

The above equivalence motivates the use of the symbol
$\stM(\drin(\stX))$. In this paper we identify functors
$\drin(\stX)\to \stM$ and objects of $\stM(\drin(\stX))$ when
$\sM$ is a stack in the \'etale topology. The
projection map $\drin(\stX)\to \stX$  induces a functor
\[
 \stM(\stX)\to \stM(\drin(\stX))
\]
which is easily checked to be the map $\alpha_{\stM,\stX}$
defined in the introduction.

In what follows we prove some general properties of the functor $\alpha_{\stM,\stX}$. We keep the notation from above.

We first observe a general procedure to reduce problems to categories fibered in groupoids.

\begin{lem}\label{from schemes to fibered categories} Assume that $\stM$ is a stack in the \'etale topology.
 If for all affine schemes $X$ over $\Fq$ the functor $\alpha_{\stM,X}\colon \stM(X)\to \stM(\drin(X))$ is fully faithful (resp. an equivalence) then $\alpha_{\stM,\stX} \colon \stM(\stX)\to \stM(\drin(\stX))$ is so for all categories $\stX$ fibered in groupoids over $\Fq$.
\end{lem}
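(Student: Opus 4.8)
The plan is to exhibit both assignments $\stX\mapsto\stM(\stX)$ and $\stX\mapsto\stM(\drin(\stX))$ as functors on categories fibered in groupoids that send $2$-colimits to $2$-limits, and then to use that an arbitrary $\stX$ is a colimit of affine schemes. By the equivalence $\Hom_\Fq(\drin(\stX),\stM)\simeq\stM(\drin(\stX))$ established above (valid since $\stM$ is an \'etale stack), together with the defining identification $\stM(\stX)=\Hom_\Fq(\stX,\stM)$, the functor $\alpha_{\stM,\stX}$ is, naturally in $\stX$, the restriction functor along $\drin(\stX)\to\stX$. Thus both its source and its target are of the form $\Hom_\Fq(-,\stM)$, applied to $\stX$ and to $\drin(\stX)=\drin(\Fq)\times_\Fq\stX$ respectively.

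Next I would write $\stX$ as the canonical $2$-colimit of the affine schemes lying over it. Since affine schemes are dense in $\Sch/\Fq$, the co-Yoneda (density) presentation gives $\stX\simeq\colim_{(U,x)}U$, where $(U,x)$ ranges over affine $U$ over $\Fq$ equipped with $x\in\stX(U)$. The universal property of this colimit in the $2$-category of fibered categories yields, for the target $\stM$, an equivalence $\stM(\stX)=\Hom_\Fq(\stX,\stM)\simeq\lim_{(U,x)}\stM(U)$. Because $\drin(\Fq)\times_\Fq-$ is a left $2$-adjoint, with right adjoint $\Homsh_\Fq(\drin(\Fq),-)$, it preserves this colimit, so $\drin(\stX)\simeq\colim_{(U,x)}\drin(U)$ and hence $\stM(\drin(\stX))\simeq\lim_{(U,x)}\stM(\drin(U))$. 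These identifications are natural in $\stX$, so under them $\alpha_{\stM,\stX}$ becomes $\lim_{(U,x)}\alpha_{\stM,U}$.

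Finally I would conclude using the formal fact that a $2$-limit of a diagram of fully faithful functors is again fully faithful, and a $2$-limit of a diagram of equivalences is an equivalence. The latter is the invariance of $2$-limits under objectwise equivalence; the former is checked directly on pseudo-cones, since a morphism between images of two cones is a compatible family $\alpha_{\stM,U}(f_U)$, and full faithfulness of each $\alpha_{\stM,U}$ produces a unique compatible family $(f_U)$ mapping to it. As each $\alpha_{\stM,U}$ is fully faithful (resp. an equivalence) by hypothesis, the same follows for $\alpha_{\stM,\stX}$.

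I expect the main obstacle to be the bookkeeping needed to make this rigorous: verifying that $\drin(-)$ commutes with the density colimit (this is exactly where cartesian closedness, i.e. the existence of the internal $\Homsh$, enters) and that the identification $\stM(\drin(\stX))\simeq\Hom_\Fq(\drin(\stX),\stM)$ is natural in $\stX$, so that the two colimit-to-limit identifications fit into a square commuting (up to coherent isomorphism) with $\alpha$. The $2$-categorical coherence in passing between colimits of fibered categories and limits of their categories of sections is the delicate part; each individual ingredient (density of affines, adjointness of $\drin(\Fq)\times_\Fq-$, and stability of $2$-limits under fully faithful functors and equivalences) is otherwise standard.
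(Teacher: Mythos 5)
Your argument is correct in substance and arrives at the same underlying reduction as the paper --- everything comes down to the affine case, using that $\stM$ is an \'etale stack --- but it is packaged differently. The paper defines the fibered category $\stN(U)=\stM(\drin(U))$, identifies it with $\delta_*\delta^*\stM$ for $\delta\colon\drin(\Fq)\to\Spec\Fq$, observes that $\alpha_{\stM,\stX}$ is obtained by applying $\Hom_\Fq(\stX,-)$ to the unit $\stM\to\stN$, and concludes because a morphism of \'etale stacks that is fully faithful (resp.\ an equivalence) on sections over affine schemes is so globally. Your version makes the underlying limit explicit: you decompose $\Hom_\Fq(\stX,\stM)$ and $\Hom_\Fq(\drin(\stX),\stM)$ as $2$-limits of the affine-level categories and invoke stability of full faithfulness and of equivalences under $2$-limits, which is a fine (if more bookkeeping-heavy) route. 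One point you should make explicit: the presentation $\stX\simeq\colim_{(U,x)}U$ over \emph{affine} $U$ is not valid in the $2$-category of categories fibered in groupoids (a non-affine scheme is not the colimit of its affine opens as a presheaf); the comparison map is only a local equivalence for the Zariski, hence \'etale, topology. Consequently the identifications $\Hom_\Fq(\stX,\stM)\simeq\lim_{(U,x)}\stM(U)$ and $\Hom_\Fq(\drin(\stX),\stM)\simeq\lim_{(U,x)}\stM(\drin(U))$ use the hypothesis that $\stM$ is an \'etale stack not only where you invoke it (for $\Hom_\Fq(\drin(\stX),\stM)\simeq\stM(\drin(\stX))$) but also at the density step itself; one also needs that $\drin(\Fq)\times_\Fq-$ preserves \'etale coverings, which holds since $\drin(V)\to\drin(X)$ is the base change of $V\to X$ along $\drin(\Fq)\to\Spec\Fq$. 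With those caveats spelled out your proof is complete; the paper's formulation via the unit of $\delta^*\dashv\delta_*$ sidesteps exactly the colimit coherence you identify as the delicate part.
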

\begin{proof}
    Consider the pseudo-functor $\sN\coloneq\underline{\Hom}_{\F_q}(\drin(\F_q),\sM)
    $ sending \[
        (T\in\Sch/\F_q)\xmapsto{\quad \quad}
\Hom_{T}(\drin(T),\sM_T)\simeq \Hom_{\F_q}(\drin(T),\sM)\]
Via the tautological equivalence $\sM\simeq
\underline{\Hom}_{\F_q}(\Spec(\F_q),\sM))$ one gets a canonical
1-morphism $\alpha\colon\sM\to\sN$.   
The functor 
    $\alpha_{\sM,\sX}$ is then identified as
    \[\alpha_{\sX}\colon\sM(\sX)\longrightarrow\sN(\sX)=\Hom_{\F_q}(\sX,\underline{\Hom}_{\F_q}(\drin(\F_q),\sM))\simeq\Hom_{\F_q}(\drin(\sX),\sM)
    \] 
    Now by \cite[Lemma 3.37, Prop. 3.36]{Vis05} if $\alpha_X$ is fully
faithful (resp. an equivalence) for each $X\in\Sch/\F_q$, then $\alpha$
is so, and  $\alpha_{\sM,\sX}$ is so too. Since $\sN$ is
clearly a stack in the étale topology, it is enough to consider 
only
affine schemes $X\in\Sch/\F_q$. 
%\]
%defined a fibered category over $\Fq$. If $\delta\colon \drin(\Fq)
%\to \Spec \Fq$ is the canonical map, it is easy to see that $\stN=\delta_*\delta^*\stM$.  Moreover, since by definition $\delta^*\stX = \stX\times \drin(\Fq)=\drin(\stX)$, we have
%\[
%\Hom_\Fq(\stX,\stN)\simeq \Hom_{\drin(\Fq)}(\delta^*\stX,\delta^*\stM) \simeq \Hom_\Fq(\drin(\stX),\stM)\simeq \stM(\drin(\stX))
%\]
%and, applying $\Hom(\stX,-)$ to the unit map $\stM\to \delta_*\delta^*\stN$, we exactly obtain $\alpha_{\stM,\stX}$.
%
%In particular it is enough to prove that  $\stM\to \stN$ is fully faithful (resp. an equivalence) and, the hypothesis, means that $\stM(X)\to \stN(X)$ is fully faithful (resp. an equivalence) for all affine schemes $X$. Since $\stM$ and $\stN$ are stacks in the \'etale topology this ends the proof.
\end{proof}

\begin{prop}\label{main functor is faithful}
If $\sM$ is a stack in the \'etale topology and a
    prestack in the fpqc topology, then the functor $\alpha_{\stM,\stX}\colon \stM(\stX)\to \stM(\drin(\stX)) $ is faithful.
\end{prop}
\begin{proof}
 This follows from the fact that $\drin(\sX)\to \sX$ is representable by an
 fpqc covering.
\end{proof}

Now we want to understand how to reduce the study of $\alpha_{\stM,\stX}$ locally on $\stX$.

\begin{conv}\label{situation scheme}
We consider the following data and assumptions:
\begin{itemize}
 \item a map  $\psi\colon V\to X$ of algebraic spaces;
 \item the stack $\stM$ is a prestack in the fpqc topology (e.g. an algebraic stack by \cite[\href{https://stacks.math.columbia.edu/tag/0APL}{0APL}]{stacks-project} );
 \item  the map $V\to X$  is an \'etale (resp. fppf,
fpqc) covering and $\stM$ is a stack in the \'etale (resp. fppf,
fpqc) topology.
\end{itemize}

 Consider the diagram
\[
\begin{tikzpicture}[xscale=4.0,yscale=-1.2]
  \node (A0_0) at (0, 0) {$\stM(X)$};
  \node (A0_1) at (1, 0) {$\stM(V)$};
  \node (A0_2) at (2, 0) {$\stM(V\times_X V)$};
  \node (A1_0) at (0, 1) {$\stM(\drin(X))$};
  \node (A1_1) at (1, 1) {$\stM(\drin(V))$};
  \node (A1_2) at (2, 1) {$\stM(\drin(V\times_X V))$};
  \path (A0_0) edge [->]node [auto] {$\scriptstyle{\psi^*}$} (A0_1);
  \path ([yshift=2.5pt]A0_1.east) edge [->]node [auto,yshift=1.0ex] {$\scriptstyle{}$} ([yshift=2.5pt]A0_2.west);
    \path ([yshift=-2.5pt]A0_1.east) edge [->]node [auto,yshift=1.0ex] {$\scriptstyle{}$} ([yshift=-2.5pt]A0_2.west);
  \path (A1_0) edge [->]node [auto] {$\scriptstyle{\psi^*}$} (A1_1);
  \path ([yshift=2.5pt]A1_1.east) edge [->]node [auto,yshift=1.0ex] {$\scriptstyle{}$} ([yshift=2.5pt]A1_2.west);
    \path ([yshift=-2.5pt]A1_1.east) edge [->]node [auto,yshift=1.0ex] {$\scriptstyle{}$} ([yshift=-2.5pt]A1_2.west); 
  \path (A0_2) edge [->]node [auto] {$\scriptstyle{\alpha_{\stM,V\times_X V}}$} (A1_2);
  \path (A0_0) edge [->]node [auto] {$\scriptstyle{\alpha_{\stM,X}}$} (A1_0);
  \path (A0_1) edge [->]node [auto] {$\scriptstyle{\alpha_{\stM,V}}$} (A1_1);
\end{tikzpicture}
\]

The two rows above are exact: indeed
$\drin(V\times_X V) \rightrightarrows \drin(V)\to\drin(X)$ is
just $V\times_X V \rightrightarrows V \to X$ base changed along
$\drin(\Fq)\to \Spec \Fq$. Notice moreover that the functors  $\alpha_{\stM,*}$ are faithful thanks to Prop. \ref{main functor is faithful}.
\end{conv}

The following results all follow by diagram chasing.

\begin{lem} Assume Conventions \ref{situation scheme}.
Let $\xi,\eta\in \stM(X)$ and $\omega\colon \alpha_{\stM,X}(\xi)\to \alpha_{\stM,X}(\eta)$ be a morphism. If $\psi^*(\omega)$ comes from a morphism $\psi^*(\xi)\to \psi^*(\eta)$ then $\omega$ comes from a morphism $\xi\to \eta$. 
\end{lem}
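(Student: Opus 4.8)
The plan is to produce the required morphism $\xi\to\eta$ by descent along the covering $\psi\colon V\to X$, exploiting that $\stM$ is a prestack, so that its $\Hom$-presheaves are sheaves. By hypothesis there is a morphism $\omega'\colon\psi^*(\xi)\to\psi^*(\eta)$ in $\stM(V)$ with $\alpha_{\stM,V}(\omega')=\psi^*(\omega)$. First I would note that to descend $\omega'$ to a morphism $\tilde\omega\colon\xi\to\eta$ in $\stM(X)$ it suffices to verify the gluing condition $p_1^*(\omega')=p_2^*(\omega')$ in $\stM(V\times_X V)$, where $p_1,p_2\colon V\times_X V\to V$ are the two projections; this is precisely the sheaf condition for $\Homsh_X(\xi,\eta)$ relative to the covering $V\to X$.

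The crux is the verification of this equality. Since $\alpha_{\stM,V\times_X V}$ is faithful by Lemma \ref{main functor is faithful}, it is enough to show $\alpha_{\stM,V\times_X V}(p_1^*\omega')=\alpha_{\stM,V\times_X V}(p_2^*\omega')$. Commutativity of the squares in Conventions \ref{situation scheme} gives $\alpha_{\stM,V\times_X V}(p_i^*\omega')=\drin(p_i)^*(\alpha_{\stM,V}(\omega'))=\drin(p_i)^*(\psi^*(\omega))$ for $i=1,2$. But $\psi\circ p_1=\psi\circ p_2$ is the structural map $V\times_X V\to X$, and $\drin(-)$ preserves this identity, so both $\drin(p_i)^*\psi^*(\omega)$ coincide with the pullback of $\omega$ along the single induced map $\drin(V\times_X V)\to\drin(X)$. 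Hence the two sides agree, and faithfulness of $\alpha_{\stM,V\times_X V}$ yields $p_1^*\omega'=p_2^*\omega'$.

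With the cocycle condition in hand, descent produces a unique $\tilde\omega\colon\xi\to\eta$ in $\stM(X)$ with $\psi^*(\tilde\omega)=\omega'$. It remains to check that $\alpha_{\stM,X}(\tilde\omega)=\omega$. Applying the bottom $\psi^*$ and using commutativity of the left-hand square, $\psi^*(\alpha_{\stM,X}(\tilde\omega))=\alpha_{\stM,V}(\psi^*\tilde\omega)=\alpha_{\stM,V}(\omega')=\psi^*(\omega)$. Since $\drin(\psi)\colon\drin(V)\to\drin(X)$ is a representable fpqc covering (being a base change of $\psi$) and $\stM$ is a prestack in the fpqc topology, the induced map on $\Hom$-sets is injective, exactly as in the proof of Lemma \ref{main functor is faithful}; therefore $\alpha_{\stM,X}(\tilde\omega)=\omega$, as desired.

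The whole argument is diagram chasing against the exact rows of Conventions \ref{situation scheme}. The only delicate point is the cocycle verification of the second paragraph: one must carry the equality up to the faithful functor $\alpha_{\stM,V\times_X V}$ and keep track of the fact that the two projections become equal only after composing down to $\drin(X)$, which is what forces $\drin(p_1)^*\psi^*(\omega)=\drin(p_2)^*\psi^*(\omega)$.
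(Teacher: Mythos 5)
Your argument is correct and is exactly the diagram chase the paper has in mind: the paper dispatches this lemma (together with its neighbours) with the single remark that it ``follows by diagram chasing'' on the two exact rows of Conventions \ref{situation scheme}, and your write-up simply makes that chase explicit — cocycle check via faithfulness of $\alpha_{\stM,V\times_X V}$, descent of the morphism, and the final identification using separatedness of the $\Hom$-presheaves. No gaps; this matches the intended proof.
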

\begin{lem} \label{lemma of the essential surjectivity main lemma}  Assume Conventions \ref{situation scheme}.
 Assume moreover  that $\alpha_{\stM,V}$ and $\alpha_{\stM,V\times_X V}$ are fully faithful. Consider a $2$-commutative diagram as in the outer diagram of
 \[
\begin{tikzpicture}[xscale=2.0,yscale=-1.2]
  \node (A0_1) at (1, 0) {$V$};
  \node (A1_0) at (0, 1) {$\drin(V)$};
  \node (A1_2) at (2, 1) {$X$};
  \node (A1_3) at (3, 1) {$\stM$};
  \node (A2_1) at (1, 2) {$\drin(X)$};
  \path (A1_0) edge [->]node [auto] {$\scriptstyle{}$} (A2_1);
  \path (A1_0) edge [->]node [auto] {$\scriptstyle{}$} (A0_1);
  \path (A0_1) edge [->]node [auto] {$\scriptstyle{}$} (A1_2);
  \path (A2_1) edge [->]node [auto] {$\scriptstyle{}$} (A1_2);
  \path (A2_1) edge [->,bend left=30]node [auto] {$\scriptstyle{}$} (A1_3);
  \path (A0_1) edge [->,bend right=30]node [auto] {$\scriptstyle{}$} (A1_3);
  \path (A1_2) edge [->,dashed]node [auto] {$\scriptstyle{}$} (A1_3);
\end{tikzpicture}
\]
Then there exists a dashed arrow as in the above diagram and it is unique up to a unique isomorphism.
\end{lem}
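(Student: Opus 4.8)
The plan is to prove both assertions by descent along $\psi\colon V\to X$, exploiting the two exact rows of Convention~\ref{situation scheme}. Unwinding the outer diagram, the data amount to an object $a\in\stM(\drin(X))$, an object $b\in\stM(V)$, and a $2$-isomorphism $\beta\colon\alpha_{\stM,V}(b)\xrightarrow{\sim}\psi^{*}a$ in $\stM(\drin(V))$, where $\psi^{*}$ denotes pullback along $\drin(V)\to\drin(X)$. The dashed arrow is an object $c\in\stM(X)$ equipped with isomorphisms $\alpha_{\stM,X}(c)\cong a$ and $\psi^{*}c\cong b$ compatible with $\beta$. First I would build $c$ by descending $b$ along $\psi$, manufacturing the needed gluing datum out of $a$.

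Since $\drin(-)=(-)\times_{\Fq}\drin(\Fq)$ preserves fibre products, $\drin(V)\times_{\drin(X)}\drin(V)=\drin(V\times_{X}V)$, and $\psi^{*}a$ carries its canonical descent datum $\delta\colon p_{1}^{*}\psi^{*}a\cong p_{2}^{*}\psi^{*}a$ on $\drin(V\times_{X}V)$. Transporting $\delta$ through $\beta$ and using that the squares of Convention~\ref{situation scheme} commute, so that $p_{i}^{*}\alpha_{\stM,V}(b)=\alpha_{\stM,V\times_{X}V}(\pi_{i}^{*}b)$ for the projections $\pi_{i}\colon V\times_{X}V\to V$, yields an isomorphism $\alpha_{\stM,V\times_{X}V}(\pi_{1}^{*}b)\cong\alpha_{\stM,V\times_{X}V}(\pi_{2}^{*}b)$. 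As $\alpha_{\stM,V\times_{X}V}$ is fully faithful by hypothesis, this descends to an isomorphism $\theta\colon\pi_{1}^{*}b\cong\pi_{2}^{*}b$ in $\stM(V\times_{X}V)$. I would then verify the cocycle condition for $\theta$: it is an equality of two morphisms in $\stM(V\times_{X}V\times_{X}V)$, and applying the functor $\alpha_{\stM,V\times_{X}V\times_{X}V}$ converts it into the cocycle identity for the genuine descent datum $\delta$, which holds; since this functor is faithful by Lemma~\ref{main functor is faithful}, the identity for $\theta$ follows. Hence $(b,\theta)$ is a descent datum for $\psi$, and as $\stM$ is a stack in the relevant topology it descends to some $c\in\stM(X)$ with $\psi^{*}c\cong b$. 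To finish existence I would check $\alpha_{\stM,X}(c)\cong a$ via the bottom exact row: both objects pull back to $\alpha_{\stM,V}(b)\cong\psi^{*}a$, and by construction the canonical descent datum of $\alpha_{\stM,X}(c)$, transported through this identification, is exactly $\delta$, so the two objects of $\stM(\drin(X))$ are canonically isomorphic, compatibly with $\beta$.

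For uniqueness I would first promote faithfulness to full faithfulness of $\alpha_{\stM,X}$: given $\omega\colon\alpha_{\stM,X}(\xi)\to\alpha_{\stM,X}(\eta)$, its pullback to $\drin(V)$ is a morphism $\alpha_{\stM,V}(\psi^{*}\xi)\to\alpha_{\stM,V}(\psi^{*}\eta)$, which comes from a morphism $\psi^{*}\xi\to\psi^{*}\eta$ because $\alpha_{\stM,V}$ is full; the preceding lemma then shows $\omega$ itself comes from a morphism $\xi\to\eta$, so $\alpha_{\stM,X}$ is full, and it is faithful by Lemma~\ref{main functor is faithful}. Given two solutions $c,c'$, the isomorphism $\alpha_{\stM,X}(c)\cong a\cong\alpha_{\stM,X}(c')$ therefore descends to a unique isomorphism $c\cong c'$, which is compatible with the chosen data on $V$ by faithfulness; this is the asserted uniqueness up to a unique isomorphism. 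The main obstacle is exactly the construction of $\theta$ on $V\times_{X}V$ together with the verification of its cocycle condition on the triple fibre product, where full faithfulness of $\alpha$ is \emph{not} assumed; the point that makes it go through is that the cocycle condition is merely an equality of morphisms, so the unconditional faithfulness of $\alpha_{\stM,V\times_{X}V\times_{X}V}$ from Lemma~\ref{main functor is faithful} already suffices to transport it from $\delta$, and no control over essential surjectivity on the triple product is needed.
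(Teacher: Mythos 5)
Your proof is correct and is exactly the diagram chase along the two exact rows of Convention \ref{situation scheme} that the paper asserts (without writing out) when it says these results ``all follow by diagram chasing'': you descend $b$ using the gluing datum manufactured from $a$ via $\beta$ and the full faithfulness of $\alpha_{\stM,V\times_X V}$, check the cocycle condition using only the unconditional faithfulness on the triple product, and get uniqueness from fullness of $\alpha_{\stM,X}$. Nothing to add.
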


We will often use the above lemma in the following form.
\begin{lem}\label{essential surjectivity main lemma}
 Assume Conventions \ref{situation scheme}.
Assume moreover that $\alpha_{\stM,Y}$ is fully faithful for all schemes $Y$ over $\Fq$. Consider commutative diagrams
  \[
  \begin{tikzpicture}[xscale=1.9,yscale=-1.2]
    \node (A0_0) at (0, 0) {$A$};
    \node (A0_1) at (1, 0) {$B$};
    \node (A0_2) at (2, 0) {$A$};
    \node (A0_3) at (3, 0) {$V$};
    \node (A1_0) at (0, 1) {$\drin(X)$};
    \node (A1_1) at (1, 1) {$\stM$};
    \node (A1_2) at (2, 1) {$\drin(X)$};
    \node (A1_3) at (3, 1) {$X$};
    \path (A0_0) edge [->]node [auto] {$\scriptstyle{}$} (A0_1);
    \path (A0_2) edge [->]node [auto] {$\scriptstyle{}$} (A0_3);
    \path (A1_0) edge [->]node [auto] {$\scriptstyle{}$} (A1_1);
    \path (A0_3) edge [->]node [auto] {$\scriptstyle{}$} (A1_3);
    \path (A0_2) edge [->]node [auto] {$\scriptstyle{}$} (A1_2);
    \path (A1_2) edge [->]node [auto] {$\scriptstyle{}$} (A1_3);
    \path (A0_1) edge [->]node [auto] {$\scriptstyle{}$} (A1_1);
    \path (A0_0) edge [->]node [auto] {$\scriptstyle{}$} (A1_0);
  \end{tikzpicture}
  \]
where $B$ is a sheaf and the second one is Cartesian, so that $A\simeq\drin(V)$. Assume moreover that the induced map $\drin(V)\to B$ factors as $\drin(V)\to V \to B$. Then there exists a $X\to \stM$, unique up to a unique isomorphism, fitting in the following commutative diagram
\[
\begin{tikzpicture}[xscale=2.0,yscale=-1.2]

  \node (A0_0) at (0, 0) {$A=\drin(V)$};
  \node (A0_1) at (1, 0) {$V$};
  \node (A0_2) at (2, 0) {$B$};
  \node (A1_0) at (0, 1) {$\drin(X)$};
  \node (A1_1) at (1, 1) {$X$};
  \node (A1_2) at (2, 1) {$\stM$};
  \path (A0_0) edge [->]node [auto] {$\scriptstyle{}$} (A0_1);
  \path (A0_1) edge [->]node [auto] {$\scriptstyle{}$} (A1_1);
  \path (A1_0) edge [->]node [auto] {$\scriptstyle{}$} (A1_1);
  \path (A1_1) edge [->]node [auto] {$\scriptstyle{}$} (A1_2);
  \path (A0_2) edge [->]node [auto] {$\scriptstyle{}$} (A1_2);
  \path (A0_0) edge [->]node [auto] {$\scriptstyle{}$} (A1_0);
  \path (A0_1) edge [->]node [auto] {$\scriptstyle{}$} (A0_2);
\end{tikzpicture}
\]
Moreover, in the above diagram, if the big square is Cartesian,
then so is the square on the right.
\end{lem}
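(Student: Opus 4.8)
The plan is to deduce the statement from Lemma \ref{lemma of the essential surjectivity main lemma}, so the first task is to check its standing hypotheses. In all three cases of Conventions \ref{situation scheme} the stack $\stM$ is a stack in the \'etale topology, so the assumption that $\alpha_{\stM,Y}$ is fully faithful for every scheme $Y$ upgrades, via Lemma \ref{from schemes to fibered categories}, to fully faithfulness of $\alpha_{\stM,Y}$ for every category fibered in groupoids $Y$ over $\Fq$. In particular $\alpha_{\stM,V}$ and $\alpha_{\stM,V\times_X V}$ are fully faithful, since $V$ and $V\times_X V$ are algebraic spaces. This is exactly what Lemma \ref{lemma of the essential surjectivity main lemma} requires.

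Next I would unwind the given data into the pentagon input of Lemma \ref{lemma of the essential surjectivity main lemma}. The left square, being Cartesian, identifies $A$ with $\drin(V)=\drin(X)\times_X V$ and supplies the maps $\drin(V)\to V$ and $\drin(V)\to \drin(X)$. The first diagram provides a morphism $\drin(X)\to \stM$ together with a morphism $B\to \stM$; composing the latter with the assumed factorization $\drin(V)\to V\to B$ produces a morphism $V\to \stM$, namely $V\to B\to \stM$. The commutativity of the first diagram, combined with the factorization $A\to B=(A\to V\to B)$, furnishes precisely the $2$-isomorphism on $\drin(V)$ between $\drin(V)\to V\to \stM$ and $\drin(V)\to \drin(X)\to \stM$ demanded by Lemma \ref{lemma of the essential surjectivity main lemma}. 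Applying that lemma yields a morphism $X\to \stM$, unique up to unique isomorphism, with $V\to X\to \stM\cong V\to B\to \stM$ and $\drin(X)\to X\to \stM\cong \drin(X)\to \stM$; this is exactly the asserted commutative diagram.

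For the final assertion I would argue by fpqc descent of isomorphisms. Set $W\coloneq X\times_\stM B$, a sheaf equipped with the canonical morphism $V\to W$ induced by $V\to X$ and $V\to B$ (well defined thanks to the $2$-isomorphism just produced). The projection $\drin(X)\to X$ is a representable fpqc covering, being a $\underline\Z$-torsor, and base change along it carries $W$ to $\drin(X)\times_X W=\drin(X)\times_\stM B$ and $V$ to $\drin(X)\times_X V=\drin(V)=A$. Because $A\to B$ factors through $V$, the canonical map $A\to \drin(X)\times_\stM B$ coming from the outer square coincides with the base change of $V\to W$ along $\drin(X)\to X$. The hypothesis that the big square is Cartesian says exactly that this map is an isomorphism; since being an isomorphism of sheaves is fpqc-local on the base, $V\to W$ is itself an isomorphism, which is the statement that the right-hand square is Cartesian.

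The whole argument is essentially diagram chasing, as the surrounding results are; the only steps carrying genuine content are the promotion of fully faithfulness from schemes to algebraic spaces through Lemma \ref{from schemes to fibered categories} and the fpqc descent in the Cartesian refinement. The point requiring the most care is matching the factorization hypothesis $\drin(V)\to V\to B$ against the pentagon of Lemma \ref{lemma of the essential surjectivity main lemma}, so that the two morphisms to $\stM$ really agree over $\drin(V)$ and, in the last step, so that the outer-square isomorphism is recognized as the base change of $V\to W$.
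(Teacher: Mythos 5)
Your proof is correct and follows essentially the same route as the paper: the existence and uniqueness of $X\to\stM$ come from Lemma \ref{lemma of the essential surjectivity main lemma} (after upgrading full faithfulness from schemes to the algebraic spaces $V$ and $V\times_X V$ via Lemma \ref{from schemes to fibered categories}, a point the paper leaves implicit), and the Cartesian refinement is an fpqc descent argument. The only cosmetic difference is that you descend the Cartesian property along $\drin(X)\to X$ in one step, whereas the paper pulls the big square back along $X_k\to\drin(X)$ and then descends along $\Spec k\to\Spec\Fq$; since $X_k\to X$ factors through $\drin(X)$, the two arguments amount to the same thing.
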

\begin{proof}
 The first claim follows from Lemma \ref{lemma of the essential
 surjectivity main lemma}, so only the last one needs to be
 proved. 

 One checks easily that there is a Cartesian diagram:
 \[
 \begin{tikzpicture}[xscale=2.0,yscale=1.2,bmr/.pic={
\draw (0,0)--++(-90:2mm)--++(180:2mm);
}]
\path
(0,0)     node (F) {$V_k$}
%+(-28:.8) pic[scale=1,red]{bmr}
+(0:1.5)  node (star) {$B_k$}
++(-90:1) node (X) {$X_k$}
+(0:1.5)  node (Y) {$\stM_k$};
\draw[->] (F)--(star);
\draw[->] (F)--(X);
\draw[->] (X)--(Y) node[midway,above,scale=.6]{};
\draw[->] (star)--(Y);
\end{tikzpicture} 
\]
which is the pullback of 
\[
 \begin{tikzpicture}[xscale=2.0,yscale=1.2,bmr/.pic={
\draw (0,0)--++(-90:2mm)--++(180:2mm);
}]
\path
(0,0)     node (F) {$V$}
%+(-28:.8) pic[scale=1,red]{bmr}
+(0:1.5)  node (star) {$B$}
++(-90:1) node (X) {$X$}
+(0:1.5)  node (Y) {$\stM$};
\draw[->] (F)--(star);
\draw[->] (F)--(X);
\draw[->] (X)--(Y) node[midway,above,scale=.6]{};
\draw[->] (star)--(Y);
\end{tikzpicture} 
\]
along $\Spec k\arr\Spec\Fq$, whence the claim.
\end{proof}

\begin{cor}\label{local equivalences are goog}
  Assume Conventions \ref{situation scheme}. If $\alpha_{\stM,V}$ is fully faithful then so is $\alpha_{\stM,X}$. If  $\alpha_{\stM,V}$  and $\alpha_{\stM,V\times_X V}$ are equivalences then so is $\alpha_{\stM,X}$.
\end{cor}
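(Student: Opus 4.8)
The plan is to deduce both assertions purely formally from the two diagram-chasing lemmas preceding the statement, using that faithfulness of all the functors $\alpha_{\stM,\ast}$ is already recorded in Lemma \ref{main functor is faithful}. No new geometric input is needed: everything is a chase in the diagram of Conventions \ref{situation scheme}.

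For the first assertion only fullness of $\alpha_{\stM,X}$ must be checked, faithfulness being automatic from \ref{main functor is faithful} (the stack $\stM$ is in particular a stack in the \'etale topology and a prestack in the fpqc topology). First I would take $\xi,\eta\in\stM(X)$ and a morphism $\omega\colon\alpha_{\stM,X}(\xi)\to\alpha_{\stM,X}(\eta)$ and pull it back along $\psi$. Commutativity of the left-hand square of the diagram in \ref{situation scheme} identifies $\psi^*\omega$ with a morphism $\alpha_{\stM,V}(\psi^*\xi)\to\alpha_{\stM,V}(\psi^*\eta)$; since $\alpha_{\stM,V}$ is full, this morphism is the image of some $\psi^*\xi\to\psi^*\eta$ in $\stM(V)$. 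The first Lemma following Conventions \ref{situation scheme} then asserts precisely that, under this hypothesis, $\omega$ is the image of a morphism $\xi\to\eta$, which is the required fullness.

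For the second assertion, the hypothesis that $\alpha_{\stM,V}$ is fully faithful together with the first assertion already shows that $\alpha_{\stM,X}$ is fully faithful, so it remains to prove essential surjectivity. I would start from an object $v\in\stM(\drin(X))$, regarded as a map $\drin(X)\to\stM$, and restrict it along $\drin(V)\to\drin(X)$. Since $\alpha_{\stM,V}$ is essentially surjective, this restriction is isomorphic to $\alpha_{\stM,V}(w)$ for some $w\in\stM(V)$, that is, to the composite $\drin(V)\to V\xrightarrow{w}\stM$. This isomorphism is exactly the $2$-commutativity datum needed to invoke Lemma \ref{lemma of the essential surjectivity main lemma}, whose fully faithful hypotheses on $\alpha_{\stM,V}$ and $\alpha_{\stM,V\times_X V}$ hold because both are assumed to be equivalences. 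The lemma then returns a map $\xi\colon X\to\stM$, and commutativity of the triangle $\drin(X)\to X\xrightarrow{\xi}\stM$ it produces says exactly $\alpha_{\stM,X}(\xi)\cong v$, giving essential surjectivity and hence the equivalence.

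The only step I expect to require genuine care is the verification, in the second assertion, that the isomorphism extracted from essential surjectivity of $\alpha_{\stM,V}$ is compatible with both structure maps out of $\drin(V)$, so that it really supplies a single coherent $2$-cell feeding into Lemma \ref{lemma of the essential surjectivity main lemma}; once this matching is pinned down the conclusion is immediate.
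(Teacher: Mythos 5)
Your proposal is correct and is exactly the intended derivation: the paper gives no separate proof of this corollary, presenting it as an immediate consequence of the two diagram-chasing lemmas following Conventions \ref{situation scheme}, which is precisely how you argue (faithfulness from \ref{main functor is faithful}, fullness from the first lemma applied after pulling back along $\psi$, essential surjectivity from \ref{lemma of the essential surjectivity main lemma} fed with the $2$-cell coming from essential surjectivity of $\alpha_{\stM,V}$). The final worry you raise is harmless: the outer square of \ref{lemma of the essential surjectivity main lemma} needs only the single isomorphism $\alpha_{\stM,V}(w)\cong v|_{\drin(V)}$ as its $2$-commutativity datum, so nothing further needs to be matched.
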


\begin{rmk}\label{the affine case}
 If $\stX$ is a category fibered in groupoids over $\Fq$ and $\stM$ is an affine scheme then $\alpha_{\stM,\stX}$ is an isomorphism (of sets). Indeed by Lemma \ref{from schemes to fibered categories} we can assume that $\stX=X=\Spec A$ is affine. If $\stM=\Spec B$ then $\stM(X)=\Hom_\Fq(B,A)$ as algebras, while
 \[
 \stM(\drin(X))=\Hom_\Fq(B,(A\otimes_\Fq k)^{\phi_k})
 \]
 It is easily checked by choosing a basis of $A$ over $\Fq$ that
 the map $A\to (A\otimes_\Fq k)^{\phi_k}$ is an isomorphism,
 thus we see that $\alpha_{\stM,X}$ is an isomorphism.
\end{rmk}

\section{The case of immersions}

In this section we consider as $\stM$ the stack of immersions.

\begin{defn}
 We denote by $\Emb$ the fibered category of 
 immersions over $\Fq$. If $\stX$ is a category fibered in groupoids over $\Fq$, then
$\Emb(\stX)$ is the category of immersions in $\stX$, that is functors $\stZ\to \stX$ representable by immersions.
 \end{defn}

 The aim of this section is to prove the following.
\begin{thm}\label{the case of closed immersion}
 Let $\stX$ be a category fibered in groupoids. Then the functor
 \[
 \Emb(\stX)\to \Emb(\drin(\stX))
 \]
 is an equivalence of categories and it preserves quasi-compact (resp. open, closed) immersions
\end{thm}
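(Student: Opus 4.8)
The plan is to reduce to an affine base, identify the target with a poset of $\phi_k$-invariant locally closed subschemes, and descend these by elementary semilinear algebra. Since immersions together with the inclusions between them satisfy faithfully flat descent, $\Emb$ is a stack for the fpqc topology, hence in particular for the étale topology, so by Lemma \ref{from schemes to fibered categories} it suffices to prove that $\alpha_{\Emb,X}\colon\Emb(X)\to\Emb(\drin(X))$ is an equivalence for every affine scheme $X=\Spec A$ over $\Fq$. For such $X$ the map $X_k\to\drin(X)$ is an étale $\underline\Z$-torsor, and since immersions descend along it, $\Emb(\drin(X))$ is the category of immersions $W\hookrightarrow X_k$ equipped with an isomorphism $\phi_k^*W\simeq W$ over $X_k$. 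As immersions are monomorphisms, such an isomorphism is unique when it exists and the cocycle condition is automatic; thus both $\Emb(X)$ and $\Emb(\drin(X))$ are posets, namely of locally closed subschemes of $X$ and of $\phi_k$-invariant locally closed subschemes $W\subseteq X_k$ respectively, and $\alpha_{\Emb,X}$ is $Z\mapsto Z_k$. Faithfulness is given by Lemma \ref{main functor is faithful}, so full faithfulness reduces to the implication $(Z_1)_k\subseteq(Z_2)_k\Rightarrow Z_1\subseteq Z_2$, which is faithfully flat descent of morphisms along $X_k\to X$: the map $Z_1\to X$ factors through the monomorphism $Z_2\hookrightarrow X$ precisely when it does after the faithfully flat base change to $X_k$.

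The essential point is surjectivity. Given a $\phi_k$-invariant locally closed $W\subseteq X_k$, I would first reduce to closed immersions: the scheme-theoretic closure $\overline W\subseteq X_k$ is again $\phi_k$-invariant because $\phi_k$ is an automorphism, and $W$ is a $\phi_k$-invariant open subscheme of $\overline W$, which is the complement of the reduced structure on a $\phi_k$-invariant closed subset, itself a $\phi_k$-invariant closed subscheme. Hence it is enough to descend $\phi_k$-invariant closed subschemes over an affine base, after which $Z$ is recovered as the open of the descent of $\overline W$ complementary to the descent of $\overline W\setminus W$. Now a $\phi_k$-invariant closed subscheme of $X_k$ is $V(J)$ for an ideal $J\subseteq A\otimes_\Fq k$ with $F(J)=J$, where $F=\id_A\otimes\phi_k$ is the $\phi_k$-semilinear Frobenius; I claim $J=(J\cap A)\otimes_\Fq k$, so that $V(J)=V(J\cap A)_k$ descends to $V(J\cap A)\subseteq X$. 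Fixing an $\Fq$-basis $(a_i)$ of $A$, the space $A\otimes_\Fq k$ is the filtered union of the finite-dimensional $F$-stable subspaces $V_S=\bigoplus_{i\in S}k\,(a_i\otimes1)$ indexed by finite sets $S$, because $F^m(a_i\otimes\lambda)=\lambda^{q^m}(a_i\otimes1)\in V_{\{i\}}$. Intersecting the $F$-stable $J$ with the $V_S$ writes $J=\bigcup_S(J\cap V_S)$ as a filtered union of finite-dimensional $F$-stable subspaces, on each of which the descent $J\cap V_S=(J\cap V_S)^F\otimes_\Fq k$ is exactly Lang's theorem \ref{thm:case of base field F}; passing to the union yields $J=(J\cap A)\otimes_\Fq k$.

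Finally, preservation of the classes of quasi-compact, open and closed immersions holds because each of these properties is fpqc-local on the base while $\alpha_{\Emb,-}$ commutes with the base change $X_k\to X$: an immersion $Z\to X$ lies in one of these classes if and only if $Z_k=W\to X_k$ does. The statement for an arbitrary category fibered in groupoids $\stX$ then follows from the affine case through Lemma \ref{from schemes to fibered categories}, these properties being again fpqc-local on $\stX$. I expect the only genuine obstacle to be the closed case over a possibly non-Noetherian affine base, namely forcing a $\phi_k$-invariant ideal to be extended from $\Fq$; the filtered-union reduction to the finite-dimensional Lang descent is the device that removes it, after which the open and general cases, as well as full faithfulness, are formal.
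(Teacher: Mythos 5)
Your overall architecture (reduce to $X=\Spec A$ affine via \ref{from schemes to fibered categories}, identify $\Emb(\drin(X))$ with the poset of $\phi_k$-invariant locally closed subschemes of $X_k$, prove full faithfulness by fpqc descent of the factorization through a monomorphism, and descend $F$-stable ideals for essential surjectivity) matches the paper's, and your key algebraic step is correct: decomposing $A_k=\bigoplus_i k\,(a_i\otimes 1)$ along an $\Fq$-basis of $A$ and applying Lang's theorem \ref{thm:case of base field F} to the finite-dimensional $F$-stable pieces $J\cap V_S$ is a clean substitute for the paper's computation, which instead expands a given $f\in J$ along an $\Fq$-basis of $k$ and inverts a Moore matrix (Corollary \ref{F-linear independent to k-linear ones}) to show the coefficients lie in $J\cap A$. (A minor point: the paper only establishes that $\Emb$ is a stack for the fppf topology and a prestack for fpqc, which is all that \ref{from schemes to fibered categories} and \ref{main functor is faithful} require; your stronger claim of fpqc effectivity is not needed and is not obviously true for general immersions.)

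There is, however, a genuine gap in your reduction from arbitrary immersions to closed ones. You assert that a $\phi_k$-invariant locally closed $W\subseteq X_k$ is an open subscheme of its scheme-theoretic closure $\overline W$. This holds when the immersion $W\to X_k$ is quasi-compact, but fails in general: an immersion always factors as a closed immersion into an open subscheme, but it factors as an open immersion into a closed subscheme only under a quasi-compactness hypothesis, because the formation of the scheme-theoretic image does not commute with restriction to opens for non-quasi-compact morphisms (see \cite[\href{https://stacks.math.columbia.edu/tag/01QV}{01QV}, \href{https://stacks.math.columbia.edu/tag/01QW}{01QW}]{stacks-project}). Since the theorem is stated for all immersions --- and the clause about preserving quasi-compact immersions only has content if non-quasi-compact ones are allowed --- your reduction does not cover the general case. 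The paper descends in the opposite order: write $W\to U\to \drin(A)$ with $W\to U$ closed and $U$ open, first descend $U$ by applying the closed case over the affine $X$ to the reduced structure on its closed complement, obtaining $U=\drin(V)$ for an open $V\subseteq X$, and then descend the closed immersion $W\to \drin(V)$. This last step requires the closed case over the non-affine base $V$, which one obtains by noting that closed immersions form a substack of $\Emb$ and re-applying \ref{from schemes to fibered categories}; your ideal-theoretic lemma then does the work over each affine. With this reordering your argument goes through.
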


% \begin{rmk}\label{maximal open for immersion}
%  Let $i\colon Z\to X$ be an immersion of algebraic spaces. Then there exists a maximum open $U(Z)\subseteq X$ such that $Z\to X$ factors through a closed immersion $Z\to U(Z)$. Moreover if $g\colon X'\to X$ is an open and surjective map, then $U(Z\times_X X')=g^{-1}(U(Z))$.
%  
%  For the first claim just set $U(Z)$ as the union of all open subsets $V$ of $X$ containing $i(Z)$ as a closed subset. At least one open subset with this property exists by definition of immersion.
%  
%  Set $i'\colon Z'\to X'$ for the base change of $i\colon Z\to X$. Since $g$ is open we have that
%  \[
%  g(U(Z')-i'(Z'))=g(U(Z'))-i(Z) \text{ and } g(U(Z'))
%  \]
%  are open. Since $g$ is surjective it follows that $i(Z)$ is closed inside $g(U(Z'))$, which is open in $X$. Thus
%  \[
%  g(U(Z'))\subseteq U(Z)
%  \]
%  The other inclusion is easier.
% \end{rmk}

We first show that $\Emb$ is a stack for the fppf topology.
\begin{prop}\label{Emb is a stack}
 The fibered category $\Emb$ is a stack in the fppf topology and a prestack in the fpqc topology. 
\end{prop}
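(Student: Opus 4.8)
The plan is to establish the two claims separately, since being a stack in the fppf topology is a stronger assertion than being a prestack in the fpqc topology. I would first recall that, for any topology $\tau$, a fibered category is a $\tau$-stack precisely when (i) for every object $T$ and every pair of objects of the fiber, the presheaf of morphisms is a $\tau$-sheaf, and (ii) every descent datum relative to a $\tau$-covering is effective. For $\Emb$ the objects over a scheme $T$ are immersions $\stZ\to T$, and it is cleanest to identify such an immersion with the data of a locally closed subscheme; the key structural input is that an immersion is a monomorphism, so the morphisms in $\Emb$ are very rigid. Concretely, given two immersions $\stZ_1\to T$ and $\stZ_2\to T$, there is at most one $T$-morphism $\stZ_1\to \stZ_2$, and it exists if and only if $\stZ_1$ factors (as a subobject of $T$) through $\stZ_2$; thus $\Homsh_\Emb$ is a subsheaf of the terminal sheaf, and the prestack/sheaf-of-morphisms condition reduces to the statement that the property ``$\stZ_1\subseteq \stZ_2$ inside $T$'' can be checked fpqc-locally on $T$. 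This is where I would invoke fpqc descent for the property of one immersion factoring through another, which follows from fpqc descent for closed immersions and for open immersions; this handles the prestack-in-the-fpqc-topology claim and simultaneously verifies condition (i) in the fppf case.

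For the harder claim, effectivity of descent in the fppf topology, I would take an fppf covering $\{T_i\to T\}$ (equivalently a single faithfully flat map $U\to T$ of finite presentation) together with a descent datum: a family of immersions $\stZ_i\to T_i$ and isomorphisms over the overlaps satisfying the cocycle condition. Since the morphisms are monomorphisms the gluing isomorphisms are automatically unique, so the content is purely the \emph{existence} of an immersion $\stZ\to T$ pulling back to the given family. The strategy is to descend the corresponding quasi-coherent sheaves of ideals. An immersion $\stZ\to T$ is a closed immersion into an open subscheme of $T$; I would first descend the open $U\subseteq T$ through which each $\stZ_i$ factors (using that open immersions form an fppf stack, e.g.\ via descent of the open subsets of the underlying topological space, which are fppf-local since $U\to T$ is submersive), and then on that open descend the quasi-coherent ideal sheaf $\shI_i\subseteq \odi{T_i}$. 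Here I would appeal to fppf (indeed fpqc) descent for quasi-coherent sheaves and for the subsheaf property, i.e.\ \cite[\href{https://stacks.math.columbia.edu/tag/023T}{023T}]{stacks-project}, to obtain a quasi-coherent ideal $\shI\subseteq \odi{U}$ whose vanishing locus is the desired closed subscheme, and finally check that the resulting $\stZ\to T$ is genuinely an immersion and restricts to the given $\stZ_i$.

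The main obstacle, and the step deserving the most care, is the descent of the open part: the locally closed nature of immersions means one cannot directly descend a single ideal sheaf on all of $T$, and one must separate the ``open'' and ``closed'' aspects of an immersion and descend each. The open subscheme $U$ is recovered as the largest open on which the descended ideal is well-behaved, and one must verify that the $U_i = U\times_T T_i$ indeed coincide with the opens through which the $\stZ_i$ factor; this uses that $U\to T$ being fppf is open and submersive, so that a subset of $T$ is open iff its preimage is. Once the open $U$ is in hand the closed-immersion descent is standard quasi-coherent descent, and the compatibility of the glued data with the descent isomorphisms is automatic by the monomorphism property, completing the proof. I would remark that the same argument shows $\Emb$ is even an fpqc \emph{prestack} but that effectivity is only claimed fppf, since fpqc descent of open subschemes can fail.
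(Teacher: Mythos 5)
Your proposal is correct, but it takes a genuinely more hands-on route than the paper. The paper's proof is essentially two citations: for the prestack claim it observes that $\Emb$ is a full sub-fibered category of $\Sh_{\Fq}$, the stack of fpqc sheaves, so its $\Hom$-presheaves are automatically fpqc sheaves; for fppf effectivity it invokes the general facts that fppf descent data for algebraic spaces are effective and that ``being an immersion'' is fppf-local on the base (Stacks Project 04SK and 02YM). You instead unwind everything: you use the monomorphism property to reduce the prestack claim to fpqc-locality of the containment of one immersion in another (which is fine --- one checks that $\stZ_1\times_T\stZ_2\to\stZ_1$ is a monomorphic immersion that becomes an isomorphism after the cover, and ``isomorphism'' is fpqc-local on the base, exactly the device the paper uses later in the full-faithfulness part of Theorem \ref{the case of closed immersion}), and for effectivity you factor each immersion canonically as a closed immersion into the largest open on which it is closed, descend the opens using that fppf maps are open and submersive (which is what makes the canonical opens compatible on overlaps and lets the glued subset be open), and then descend the quasi-coherent ideal sheaves. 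Both arguments are sound. The paper's buys brevity and robustness by outsourcing to the general descent machinery for algebraic spaces; yours buys transparency --- in particular it isolates precisely where fppf (rather than fpqc) is needed, namely in the descent of the open part, which matches your closing remark that only the prestack property is claimed fpqc. The one place to be slightly more careful in a write-up is the compatibility of the canonical opens $U_i=T_i\setminus(\overline{\stZ_i}\setminus\stZ_i)$ with the projections $T_i\times_T T_j\to T_i$: this holds because those projections are open maps, so preimage commutes with topological closure; as phrased (``the largest open on which the descended ideal is well-behaved'') the description is a little circular, since the ideal is only descended after the open is.
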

\begin{proof}
 The fpqc-site $\Sch/\F_q$ is subcanonical, so objects in $\Emb$ are fpqc
 sheaves. In particular, $\Emb$ is a prestack in the fpqc topology. Effective descent for the fppf topology follows from \cite[\href{https://stacks.math.columbia.edu/tag/04SK}{04SK}]{stacks-project} and \cite[\href{https://stacks.math.columbia.edu/tag/02YM}{02YM}]{stacks-project}
%  Since a disjoint union of immersions $Z_i\to X_i$ is an immersion $\sqcup_i Z_i \to \sqcup_i X_i$, it is enough to show effective discent along a single fpqc covering $f\colon Y\to X$. Let $Z\to Y$ be a 
%  The prestack $\Emb$ is a stack thanks to \cite[\href{https://stacks.math.columbia.edu/tag/0247}{0247}]{stacks-project} and \cite[\href{https://stacks.math.columbia.edu/tag/0424}{0424}]{stacks-project}
%  
%  For $\OpEmb$, we first observe that it is a Zariski stack. Thus, in order to prove that it is an fpqc stack, it is enough to show effective descent for faithfully flat maps $f\colon Y\to X$ of affine schemes.
%  
%  If $U\in \OpEmb(Y)$, the existence of a descent datum just means that $\pr_2^{-1}(U)=\pr_1^{-1}(U)$, where $\pr_1,\pr_2 \colon Y\times_X Y\to Y$ are the projections. This implies $U=f^{-1}(f(U))$. By \cite[\href{https://stacks.math.columbia.edu/tag/02JY}{02JY}]{stacks-project} we conclude that $f(U)\in \OpEmb(X)$ and it induces $U$ with descent data. 
\end{proof}

We need some preparatory lemmas. The following is called the
Moore determinant \cite[Corollary 1.3.7]{Goss98}. We present here a direct proof of the result.
\begin{lem}
The following relation holds in the ring $\Fq[x_0,\dots,x_r]$:
 \[
 \det 
 \begin{pmatrix}
x_0 & x_1 & \dots & x_r \\
x_0^q & x_1^q & \dots & x_r^q \\
\vdots & \vdots & \ddots & \vdots \\
x_0^{q^r} & x_1^{q^r} & \dots & x_r^{q^r}
\end{pmatrix}
= \omega \prod_{(a_0:\cdots:a_r)\in \PP^r(\Fq)}(a_0x_0 + \cdots + a_r x_r) 
 \]
where $\omega\in \Fq^*$ and, in the product on the right, one chooses a representative of an element of $\PP^r(\Fq)$.
\end{lem}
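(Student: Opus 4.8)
The plan is to treat the left-hand side as a single polynomial $D\in\Fq[x_0,\dots,x_r]$ and show that (i) $D$ and the product on the right are homogeneous of the same degree, (ii) every linear form occurring on the right divides $D$, and (iii) $D\neq 0$. Unique factorization in the polynomial ring then forces $D=\omega\prod\ell$ with $\omega\in\Fq^*$.

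First I would record the degrees. The entry in row $i$ (with $i=0,\dots,r$) is $x_j^{q^i}$ and has degree $q^i$, so every term $\prod_i x_{\sigma(i)}^{q^i}$ of the Leibniz expansion has degree $\sum_{i=0}^r q^i=(q^{r+1}-1)/(q-1)$. Hence $D$ is homogeneous of this degree, which is exactly $\#\PP^r(\Fq)$, the number of degree-one factors on the right; as each factor is linear, the two sides have equal degree. (Changing the chosen representatives only rescales each factor by an element of $\Fq^*$, hence the product only by a global unit, which is absorbed into $\omega$.)

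The heart of the argument is the divisibility step. Fix $(a_0:\dots:a_r)\in\PP^r(\Fq)$ and put $\ell=a_0x_0+\dots+a_rx_r$. The key observation is that, since each $a_j\in\Fq$ satisfies $a_j^{q^i}=a_j$ and $y\mapsto y^{q^i}$ is additive in characteristic $p$, the $\Fq$-linear combination of the columns of the Moore matrix with coefficients $a_0,\dots,a_r$ is the single column $(\ell,\ell^q,\ell^{q^2},\dots,\ell^{q^r})^{T}$, because $\sum_j a_j x_j^{q^i}=\sum_j a_j^{q^i}x_j^{q^i}=\ell^{q^i}$. Choosing an index $j$ with $a_j\neq 0$ and replacing column $j$ by this combination multiplies $D$ by the unit $a_j\in\Fq^*$ and yields a matrix whose $j$-th column is entirely divisible by $\ell$ (as $\ell\mid\ell^{q^i}$ for all $i\ge 0$); therefore $\ell\mid D$.

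Finally I would assemble the pieces. The forms $\ell$ attached to distinct points of $\PP^r(\Fq)$ are pairwise non-associate irreducibles in the UFD $\Fq[x_0,\dots,x_r]$, so their product divides $D$; the degree count of the first paragraph then gives $D=\omega\prod\ell$ for some $\omega\in\Fq$. To check $\omega\neq 0$ it suffices to exhibit one monomial with nonzero coefficient in $D$: the identity permutation contributes $x_0x_1^q\cdots x_r^{q^r}$, and a permutation $\sigma$ produces this same monomial only if $q^{\sigma^{-1}(j)}=q^j$ for every $j$, i.e.\ $\sigma=\id$, so its coefficient is $\pm1\neq0$ and $\omega\in\Fq^*$. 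The only genuinely nontrivial point is the divisibility step, which rests entirely on the Frobenius-additivity identity $\sum_j a_j x_j^{q^i}=\ell^{q^i}$, valid precisely because the coefficients $a_j$ lie in $\Fq$; everything else is degree bookkeeping and a unique-factorization argument.
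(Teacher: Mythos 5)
Your proof is correct and follows essentially the same strategy as the paper's: both rest on the Frobenius-linearity identity $\sum_j a_j x_j^{q^i}=\ell^{q^i}$ to show that each linear form divides the determinant, and then conclude by pairwise non-associateness and a degree count. The only difference is cosmetic: you derive the divisibility by a column operation and Laplace expansion directly in $\Fq[x_0,\dots,x_r]$, whereas the paper evaluates on the vanishing locus of $\ell$ over $\overline{\Fq}$; your version also makes the nonvanishing of the determinant (hence $\omega\neq 0$) explicit via the leading monomial, a point the paper leaves implicit in its degree computation.
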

\begin{proof}
 Denote by $N=N(x_0,\dots,x_r)$ the matrix in the statement and by $F=F(x_0,\dots,x_r)$ its determinant. We are going to prove the equality on $\overline \Fq$. 
 
 Let $a=(a_0,\dots,a_r) \in \Fq^{r+1}$ be a non zero element and set 
 \[
 L_a = a_0 x_0 + \dots + a_rx_r \in \Fq[x_0,\dots,x_r]
 \]
 Notice that
 \[
 N(x_0,\dots,x_r) \cdotp a = (L_a, L_a^q,\dots,L_a^{q^r})
 \]
 where the right hand side should be thought of as a vertical vector. In particular it is clear that, if $(u_0,\dots,u_r)\in \overline \Fq^{r+1}$ is such that $L_a(u_0,\dots,u_r)=0$ then $N(u_0,\dots,u_r)\cdotp a = 0$ and therefore $F(u_0,\dots,u_r)=0$. In other words the zero locus $\{L_a=0\}\subseteq \overline \Fq^{r+1}$ is contained in the zero locus $\{F=0\}\subseteq \overline \Fq^{r+1}$. We can therefore conclude that $L_a$ divides $F$ in $\overline\Fq[x_0,\dots,x_r]$. 
 
 Notice that given $a,b\in \Fq^{r+1}$ we have that $L_a$ and $L_b$ generates the same ideal in $\overline \Fq[x_0,\dots,x_n]$ if and only if $a=\lambda b$ for some $\lambda \in \Fq$. Using the factorization into primes we can conclude that the product $P$ in the statement divides $F$. But
 \[
 \deg P = \#\PP^r(\Fq)=\frac{q^{r+1}-1}{q-1}= 1 + q + \cdots + q^r
 \]
 Using the inductive determinant formula it is easy to see that $\deg F = \deg P$, which ends the proof.
\end{proof}

\begin{cor}
\label{F-linear independent to k-linear ones}
 Let $L$ be a field over $\Fq$ and $\mu_0,\dots,\mu_r \in L$.
 Then $\mu_0,\dots,\mu_r$ are $\Fq$-linear independent if and only if the matrix
 \[
 \begin{pmatrix}
\mu_0 & \mu_1 & \dots & \mu_r \\
\mu_0^q & \mu_1^q & \dots & \mu_r^q \\
\vdots & \vdots & \ddots & \vdots \\
\mu_0^{q^r} & \mu_1^{q^r} & \dots & \mu_r^{q^r}
\end{pmatrix}
 \]
 is invertible.
\end{cor}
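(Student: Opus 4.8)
The plan is to read off the statement directly from the Moore determinant formula of the previous lemma, via the specialization $x_i\mapsto \mu_i$. First I would note that the polynomial identity
\[
\det N(x_0,\dots,x_r)=\omega\prod_{(a_0:\cdots:a_r)\in \PP^r(\Fq)}(a_0x_0+\cdots+a_rx_r),\qquad \omega\in\Fq^*,
\]
is an equality in $\Fq[x_0,\dots,x_r]$, hence remains valid after applying the $\Fq$-algebra homomorphism $\Fq[x_0,\dots,x_r]\to L$ sending $x_i\mapsto \mu_i$. Writing $M$ for the matrix in the statement, this yields
\[
\det M=\omega\prod_{(a_0:\cdots:a_r)\in \PP^r(\Fq)}(a_0\mu_0+\cdots+a_r\mu_r),\qquad \omega\in\Fq^*.
\]
Since $M$ has entries in the field $L$, it is invertible exactly when $\det M\neq 0$; and because $L$ is an integral domain and $\omega\neq 0$, this happens precisely when none of the factors $a_0\mu_0+\cdots+a_r\mu_r$ vanishes.

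Next I would translate the vanishing of a factor into a linear relation. Each factor is indexed by a point $(a_0:\cdots:a_r)\in\PP^r(\Fq)$, for which one fixes a nonzero representative $(a_0,\dots,a_r)\in\Fq^{r+1}$; rescaling the representative by $\lambda\in\Fq^*$ multiplies the linear form by $\lambda$, so whether a given factor vanishes depends only on the point and not on the chosen representative. Consequently some factor vanishes if and only if there is a nonzero tuple $(a_0,\dots,a_r)\in\Fq^{r+1}$ with $\sum_i a_i\mu_i=0$, that is, if and only if $\mu_0,\dots,\mu_r$ satisfy a nontrivial $\Fq$-linear relation. Combining this with the previous paragraph gives $\det M\neq 0$ iff the $\mu_i$ are $\Fq$-linearly independent, which is the assertion.

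There is essentially no obstacle: the corollary is an immediate consequence of the determinant computation, and the only point needing a word of care is the independence of the factors from the chosen projective representatives, which is settled by the rescaling remark. One could also argue the easy direction by hand without the Moore formula: if $\sum_i a_i\mu_i=0$ with $a_i\in\Fq$, then raising to the power $q^j$ and using $a_i^{q^j}=a_i$ gives $\sum_i a_i\mu_i^{q^j}=\bigl(\sum_i a_i\mu_i\bigr)^{q^j}=0$ for every $j$, so $(a_0,\dots,a_r)^{\mathrm{t}}$ is a kernel vector of $M$ and $M$ is singular. Invoking the preceding lemma, however, delivers both implications simultaneously.
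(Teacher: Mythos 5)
Your proof is correct and is exactly the argument the paper intends: the corollary is stated without proof as an immediate consequence of the Moore determinant lemma, obtained by specializing $x_i\mapsto\mu_i$ and noting that the product of linear forms vanishes in the field $L$ precisely when some nontrivial $\Fq$-linear relation holds. Your extra remarks (independence of the choice of projective representatives, and the elementary kernel-vector argument for the easy direction) are correct but not needed beyond what the lemma already supplies.
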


\begin{proof}[Proof of Theorem \ref{the case of closed immersion}]
The last claims hold because those properties of morphisms are local in the target for the fpqc topology (see \cite[\href{https://stacks.math.columbia.edu/tag/02L3}{02L3}]{stacks-project}, \cite[\href{https://stacks.math.columbia.edu/tag/02L6}{02L6}]{stacks-project}, \cite[\href{https://stacks.math.columbia.edu/tag/02L8}{02L8}]{stacks-project}).

 By Lemma \ref{from schemes to fibered categories} we can assume that $\stX=\Spec A$
 is affine. 

 1). Fully faithfulness. Faithfulness follows from Prop.  \ref{main functor is faithful} and Prop. \ref{Emb is a stack}. Thus let    $U_i\to \stX$, $i=1,2$ be immersions and $$\lambda_D\colon \drin(U_1)= 
 U_1\times_\stX\drin(\stX)\arr U_2\times_\stX\drin(\stX)=\drin(U_2)$$ be a map over $\drin(\stX)$. We have to show that $\lambda_D$ descends to a map 
 $\lambda \colon U_1\to U_2$.
 
 Consider the projection $p\colon U_1\times_\stX U_2 \to U_1$. By construction its base change along $\drin(\stX)\to \stX$ is an immersion with a section, thus an isomorphism. By fpqc descent it follows that also $p\colon U_1\times_\stX U_2 \to U_1$ is an isomorphism, which allows to define the map $\lambda$.
%  
%  If $U_i\to \stX$ are open immersions, then $\lambda_D$ is just an inclusion of open
%  subsets of $\drin(\sX)$. 
%  Considering that the projections
%  $\drin(U_i)\twoheadrightarrow U_i$ ($i=1,2$) are surjections,
%  we see that $U_1\subseteq U_2$, and this inclusion is the
%  $\lambda$ we are looking for.
%  
%  Thus assume that $U_i\to \stX$ are quasi-compact immersions. We can factor the quasi-compact immersion
%  $U_i\to\sX$ into $U_i\to \sY_i\to \sX$, where $\sY_i\to \sX$ is the
%  schematic image (hence a closed immersion) of $U_i\to \sX$ and $U_i\to \sY_i$
%  is an open immersion. The schematic image $\sY_i\to \sX$ (of quasi-compact morphisms) is
%  compatible with flat base change and is
%  universal among all closed subschemes of $\sX$ through which
%  $U_i$ factors, thus we are reduced to the case when the $U_i$
%  ($i=1,2$)
%  are closed immersions. Let $I_i$ ($i=1,2$) be the ideals of $A$
%  corresponding to the closed subschemes $U_i\to \sX$ ($i=1,2$)
%  respectively. The map $\lambda_D$ implies that $I_2$ is contained in $I_1$ after pulling back along the fpqc covering $\sX_k\to \sX$, so
%  $I_2\subseteq I_1$ as well. This provides the desired map $\lambda$.
% 

 2). Essential surjectivity. Let us first show how to conclude assuming the
 result being true for closed immersions.
Let $Z\to U \to \drin(A)$  be an
 immersion, where $Z\to U$ is a closed immersion and $U\subseteq \drin(A)$ is an open subset. Consider the reduced closed substack
 $C=\drin(A)\setminus U\to \drin(A)$, so that there exists a closed immersion $Q\to \Spec A$ inducing $C\to \drin(A)$. By construction $V=(\Spec
 A)\setminus Q \to \Spec A$ induces $U\to \drin(A)$. In particular $U=\drin(V)$. Applying again the result for closed immersion on $Z\to U=\drin(V)$ we get the result.
 
 Let us now focus on the case of closed immersions.  
 Since closed immersions form a substack of $\Emb$, using again Lemma \ref{from schemes to fibered categories} we just have to show that a closed embedding $Z\to
 \drin(A)$ comes from $A$. This closed embedding is given by an
 ideal $I$ of $A_k$ such that $\phi_k(I)=I$. We have to prove
 that $I$ is generated by elements of $A$. Let
 $\{\mu_i\}_{i\in J}$ be a
 basis of $k$ over $\Fq$. Given $f\in I\subseteq A_k$ we can write
 \[
     f=\sum_{r=0}^n \mu_{i_r} a_r
 \]
 where $a_r \in A$. Set \[M\coloneq
 \begin{pmatrix}
     \mu_{i_0} & \mu_{i_1} & \dots & \mu_{i_n} \\
     \mu_{i_0}^q & \mu_{i_1}^q & \dots & \mu_{i_n}^q \\
\vdots & \vdots & \ddots & \vdots \\
\mu_{i_0}^{q^n} & \mu_{i_1}^{q^n} & \dots & \mu_{i_n}^{q^n}
\end{pmatrix}
 \] 
then we have a matrix equation:
 \[
    \hvec{f\\\phi_k(f)\\\cdots\\\phi_k^{n}(f)}=
M\hvec{a_0\\a_1\\\cdots\\a_n} 
\]
 in which $M$ is invertible by Cor. \ref{F-linear independent to
 k-linear ones}.
 The relations obtained inverting $M$ in the equation above allow to conclude that
 \[
 \langle a_0, \dots, a_n \rangle_k= \langle f,\phi_k(f),\dots,\phi_k^{n}(f)\rangle_k \subseteq I 
 \]
 as $k$-vector spaces. This implies that 
 \[
     \{a_0,\dots,a_n\} \subseteq I\cap A
 \]
  hence $I\subseteq (I\cap A)A_k\subseteq I$, so $I=(I\cap A)A_k$
. \end{proof}

Using \ref{the case of closed immersion} we prove a variant of Theorem \ref{special M}, (1).

\begin{lem}\label{fully faithful for separated diagonal}
 Let $\stX$ be a category fibered in groupoids over $\Fq$ and
 $\stM$ be a stack in the \'etale topology with the following
 property: for all $\xi,\eta\in \stM(T)$ for a scheme $T$ the
 functor $\Homsh_\stM (\xi,\eta)\to T$ is a sheaf in the fpqc
 topology and its diagonal is an  immersion. Then the functor
 \[
 \alpha_{\stM,\stX}\colon \stM(\stX)\to \stM(\drin(\stX))
 \]
 is fully faithful.
\end{lem}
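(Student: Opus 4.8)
The plan is to reduce to the affine case and then convert the problem of descending morphisms into one about descending immersions, so that Theorem \ref{the case of closed immersion} applies directly. First, since $\stM$ is a stack in the \'etale topology, Lemma \ref{from schemes to fibered categories} lets me assume $\stX = X = \Spec A$ is affine. Faithfulness is then free: the hypothesis that each $\Homsh_\stM(\xi,\eta) \to T$ is an fpqc sheaf is exactly the statement that $\stM$ is a prestack in the fpqc topology, so Lemma \ref{main functor is faithful} gives faithfulness of $\alpha_{\stM,X}$. Everything therefore reduces to fullness.

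For fullness I would fix $\xi,\eta \in \stM(X)$ and write $H \coloneq \Homsh_\stM(\xi,\eta)$, a sheaf over $X$ whose diagonal is an immersion by hypothesis. The crucial reinterpretation is that morphisms are sections of $H$: a morphism $\xi \to \eta$ in $\stM(X)$ is a section of $H \to X$, and a morphism $\omega\colon \alpha_{\stM,X}(\xi) \to \alpha_{\stM,X}(\eta)$ is a section $t$ of the base change $\drin(H) = \drin(X)\times_X H \to \drin(X)$ (Hom-sheaves commute with the base change $\drin(X) \to X$). Since the diagonal of $\drin(H) \to \drin(X)$ is again an immersion, and a section of a morphism with immersion diagonal is itself an immersion (being a base change of that diagonal), the section $t\colon \drin(X) \hookrightarrow \drin(H)$ is an object of $\Emb(\drin(H))$.

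At this point I would invoke essential surjectivity of $\Emb(H) \to \Emb(\drin(H))$ from Theorem \ref{the case of closed immersion}: the immersion $t$ comes from an immersion $j\colon Z \hookrightarrow H$ equipped with an isomorphism $\beta\colon \drin(Z) \simeq \drin(X)$ over $\drin(H)$. Composing with the projection $\drin(H) \to \drin(X)$ and using that $t$ is a section, the structure map $\drin(Z) \to \drin(X)$ is identified with $\beta$, hence is an isomorphism; since $\drin(X) \to X$ is an fpqc covering and being an isomorphism is fpqc-local on the target, $Z \to X$ is an isomorphism. Then $\sigma \coloneq j \circ (Z \to X)^{-1}\colon X \to H$ is a section of $H \to X$, i.e. a morphism $\xi \to \eta$, and a short chase shows $\drin(\sigma) = t$, i.e. $\alpha_{\stM,X}(\sigma) = \omega$. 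This produces the required preimage.

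The main obstacle is conceptual rather than computational: recognizing that the hypothesis on the diagonal of the Hom-sheaves is exactly the condition needed to view each morphism of $\stM$ as an immersion, thereby reducing descent of morphisms to the already-proven descent of immersions. Once this dictionary is in place, the two remaining points — that sections of maps with immersion diagonal are immersions, and that the isomorphism property descends along the fpqc covering $\drin(X) \to X$ — are standard.
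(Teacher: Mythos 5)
Your proof is correct and follows essentially the same route as the paper's: reduce to affine $X$ via Lemma \ref{from schemes to fibered categories}, get faithfulness from Lemma \ref{main functor is faithful}, observe that the section $\drin(X)\to\drin(H)$ is an immersion because the diagonal of $H\to X$ is, descend it via Theorem \ref{the case of closed immersion}, and conclude that $Z\to X$ is an isomorphism by fpqc descent. The only cosmetic difference is that the paper checks the isomorphism property after the scheme-level covering $X_k\to X$ rather than $\drin(X)\to X$, which amounts to the same thing.
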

\begin{proof}
 By Lemma \ref{from schemes to fibered categories} we can assume that $\stX=X=\Spec A$ is affine. Let $\xi,\eta\in \stM(X)$ and set $H=\Homsh_\stM(\xi,\eta)\to X$. By Prop. \ref{main functor is faithful} we need to show that a commutative diagram 
   \[
  \begin{tikzpicture}[xscale=1.9,yscale=-1.2]
    \node (A0_1) at (1, 0) {$H$};
    \node (A1_0) at (0, 1) {$\drin(X)$};
    \node (A1_1) at (1, 1) {$X$};
    \path (A1_0) edge [->]node [auto] {$\scriptstyle{}$} (A0_1);
    \path (A1_0) edge [->]node [auto] {$\scriptstyle{}$} (A1_1);
    \path (A0_1) edge [->]node [auto] {$\scriptstyle{}$} (A1_1);
  \end{tikzpicture}
  \]
induces a section of $H\to X$. We have Cartesian diagrams of solid arrows
  \[
  \begin{tikzpicture}[xscale=1.9,yscale=-1.2]
    \node (A0_0) at (0, 0) {$X_k$};
    \node (A0_1) at (1, 0) {$\drin(X)$};
    \node (A0_2) at (2, 0) {$Z$};
    \node (A1_0) at (0, 1) {$H_k$};
    \node (A1_1) at (1, 1) {$\drin(H)$};
    \node (A1_2) at (2, 1) {$H$};
    \node (A2_0) at (0, 2) {$X_k$};
    \node (A2_1) at (1, 2) {$\drin(X)$};
    \node (A2_2) at (2, 2) {$X$};
    \path (A0_0) edge [->]node [auto] {$\scriptstyle{}$} (A0_1);
    \path (A2_0) edge [->]node [auto] {$\scriptstyle{}$} (A2_1);
    \path (A1_0) edge [->]node [auto] {$\scriptstyle{}$} (A1_1);
    \path (A0_1) edge [->,dashed]node [auto] {$\scriptstyle{}$} (A0_2);
    \path (A1_1) edge [->]node [auto] {$\scriptstyle{}$} (A1_2);
    \path (A0_2) edge [->,dashed]node [auto] {$\scriptstyle{}$} (A1_2);
    \path (A1_0) edge [->]node [auto] {$\scriptstyle{}$} (A2_0);
    \path (A1_1) edge [->]node [auto] {$\scriptstyle{}$} (A2_1);
    \path (A0_0) edge [->]node [auto] {$\scriptstyle{}$} (A1_0);
    \path (A0_1) edge [->]node [auto] {$\scriptstyle{}$} (A1_1);
    \path (A2_1) edge [->]node [auto] {$\scriptstyle{}$} (A2_2);
    \path (A1_2) edge [->]node [auto] {$\scriptstyle{}$} (A2_2);
  \end{tikzpicture}
  \]
  Since the diagonal of $H\to X$ is an immersion, the same property holds for the
  diagonal of $\drin(H)\to \drin(X)$. It follows that the
  section $\drin(X)\to \drin(H)$ is an immersion as well. Thanks to Theorem \ref{the case of
  closed immersion} we find an  immersion $Z\to H$ and a Cartesian diagram like in the
  above diagram. Since $Z$ is a sheaf in the fpqc topology and
  the map $Z\to X$ become an isomorphism after the fpqc base
  change $X_k\to X$ it follows that $Z\to X$ is an isomorphism
  as well. In conclusion the section $X\to Z \to H$ satisfies the requests.
\end{proof}

\section{The proper case}

The goal of this section is to prove Theorem \ref{special X}. We will actually work in a slightly more general situation than the one
in Theorem \ref{special X}. 
\begin{defn}
 Let $\stY$ be a quasi-compact category fibered in groupoids and $\shC\subseteq \QCoh(\stY)$ be a full subcategory. We say that $\shC$ generates $\QCoh(\stY)$ if all quasi-coherent sheaves on $\stY$ are a quotient of a direct sum of sheaves in $\shC$.
\end{defn}

\begin{conv} \label{situation of proper case} Let $\sX$ be a quasi-compact category fibered in groupoids over
$\F_q$.
Suppose that  
    \begin{itemize}
        \item     there is a full
    subcategory $\shC\subseteq\QCoh_f(\stX)$ generating $\QCoh(\stX)$
    which is stable under finite direct sums;
        \item for all $\shF\in
    \QCoh_f(\stX_k)$ the $k$-vector space
    $\Hom_{\stX_k}(\E\otimes k,\shF)$ has finite dimension for $\forall\,\E\in \shC$, where $\E\otimes k$ is the pullback of
    $\E$  along
$\pi\colon \stX_k \to \stX$.
\end{itemize}
\end{conv}

Here is the main theorem of this section:

\begin{thm}\label{thm:the proper case2}
 Suppose that $\stX$ satisfies Conventions \ref{situation of proper case}.
 Then
 $\alpha_{\sM,\sX}$ is an equivalence if
\begin{enumerate}
    \item $\stM=\QCoh_f$, or
    \item $\stM$ is a quasi-compact algebraic stack with quasi-affine diagonal
        and there exists a representable fpqc covering $V\to \stX_k$ from a
        Noetherian scheme, or
\item $\sM$ is a Noetherian algebraic stack with
quasi-affine diagonal, or
\item $\sM$ is an affine gerbe over a
field.
\end{enumerate}
\end{thm}

We start showing how the cases (2), (3), (4) follows from (1) of Theorem \ref{thm:the proper case2}.

\begin{lem}\label{ind coherent in quasi-coherent}
    Let $\stM$ be a quasi-compact and quasi-separated category fibered in
    groupoid. Consider two ind-objects
 $\{\sF_i\}_{i\in I},\, \{\sG_j\}_{j\in J}\in\ind(\QCoh(\sM))$, where
 \begin{enumerate}[label=(\arabic*)]
     \item either each $\sF_i\in\QCoh_f(\sM)$, 
     \item or each $\sF_i$ is of finite type, and all the transition maps in $\{\sG_j\}_{j\in J}$
 are injective,
 \end{enumerate}
  then the ``taking colimit'' functor induces an isomorphism
 of the $\Hom$s:
$$\Hom_{\ind(\QCoh(\stM))}(\{\sF_i\}_{i\in I},\{\sG_j\}_{j\in
J})\arr\Hom_{\QCoh(\stM)}(\varinjlim_{i\in
    I}\sF_i,\varinjlim_{j\in
    J}\sG_j)$$
 \end{lem}
\begin{proof}
Consider  $\sF\in \QCoh(\stM))$ and
    $\{\sG_j\}_{j\in J}\in \ind( \QCoh(\stM))$, where 
    \begin{enumerate}[label=(\arabic*)]
        \item  either $\sF\in \QCoh_f(\sM)$,
        \item  or $\sF$ is of finite type and the transition maps in
            $\{\sG_j\}_{j\in J}$ are injective.
    \end{enumerate}
    We have to show that the map 
    $$\Hom_{\ind(\QCoh(\stM))}(\sF,\{\sG_j\}_{j\in
    J})=\varinjlim_{j\in
J}\Hom_{\QCoh_f(\sM)}(\sF,\sG_j)\arr\Hom_{\QCoh(\stM)}(\sF,\varinjlim_{j\in
    J}\sG_j)$$
    is an isomorphism.
Let $\pi \colon U\to \stM$ be a fpqc covering from an affine scheme, set $R=U\times_\stX U$, which is a quasi-compact algebraic space, let $V\to R$ be an \'etale atlas from an affine scheme and set $\alpha,\beta \colon V\to U$ the induced maps. If $\shF,\shG\in \QCoh(\stM)$ then the following sequence is exact
 \[
     \Hom_{\QCoh(\stM)}(\shF,\shG) \to
     \Hom_{\QCoh(U)}(\pi^*\shF,\pi^*\sG)\rightrightarrows
 \Hom_{\QCoh(V)}(\alpha^*\pi^*\shF,\alpha^*\pi^*\sG)
 \]
 Using this we can reduce the problem to the case when $\sM$ is
 an affine scheme where the claim is obvious.
\end{proof}

\begin{lem}\label{general for proper and stacks}
Let $\stX$ and $\stM$ as follows:
\begin{enumerate}[label={\rm (\Alph*)}]
 \item $\stX$ is a category fibered in groupoids over $\Fq$ for which
 \[
 \alpha_{\QCoh_f,\stX}\colon \QCoh_f(\stX)\to \QCoh_f(\drin(\stX))
 \]
 is an equivalence;
 \item $\stM$ is a stack in groupoids over $\Fq$ for the fpqc (resp. fppf,
 \'etale) topology with
 quasi-affine diagonal and admitting a representable fpqc
 (resp. fppf, \'etale)  covering from an affine scheme;
 \item for any map
 $\drin(\sX)\xrightarrow{\psi}\sM$ and any
 quasi-coherent sheaf of algebras $\alA$ over $\stM$, the pullback $\psi^*\alA$ is the colimit of a ring object in $\ind(\QCoh_f(\drin(\stX)))$. 
\end{enumerate}
 Then the functor
 \[
 \alpha_{\stM,\stX}\colon \stM(\stX)\to \stM(\drin(\stX))
 \]
 is an equivalence. \end{lem}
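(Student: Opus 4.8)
The plan is to prove Lemma~\ref{general for proper and stacks} by reducing the general statement about maps into $\stM$ to the already-established Drinfeld's lemma for $\QCoh_f$, using the structure of $\stM$ as a stack admitting a nice affine covering. Since $\stM$ is a stack in groupoids with quasi-affine diagonal that admits a representable fpqc covering $W\to \stM$ from an affine scheme $W$, a map $\stX\to \stM$ should be encodable in terms of quasi-coherent data on $\stX$ — specifically, a map to the affine scheme $W$ together with the descent datum along $W\times_\stM W\rightrightarrows W$. First I would note that by Remark~\ref{the affine case}, for an affine scheme the functor $\alpha_{\stM,-}$ is already an isomorphism; the work lies entirely in handling the gluing/descent data.

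The key steps, in order, are as follows. First I would reformulate an object of $\stM(\drin(\stX))$, \emph{i.e.} a map $\psi\colon \drin(\stX)\to \stM$, in terms of the quasi-affine covering. Pulling back $W\to \stM$ along $\psi$ yields a quasi-affine (hence representable) fpqc covering $\widetilde W\to \drin(\stX)$, and since the diagonal of $\stM$ is quasi-affine, the fiber products $\widetilde W\times_{\drin(\stX)}\widetilde W$ are quasi-affine over $\drin(\stX)$ as well. The plan is to present $\psi$ as the data of these quasi-affine schemes over $\drin(\stX)$ together with the groupoid structure maps satisfying the cocycle condition. Second, I would observe that a quasi-affine scheme over $\drin(\stX)$ is governed by a sheaf of algebras: by hypothesis~(3), the pullback of any quasi-coherent sheaf of algebras on $\stM$ along $\psi$ is the colimit of a ring object in $\ind(\QCoh_f(\drin(\stX)))$. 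Since by~(1) the functor $\alpha_{\QCoh_f,\stX}$ is an equivalence, and colimits along $\ind$ are preserved (using Lemma~\ref{ind coherent in quasi-coherent} to identify $\ind(\QCoh_f)$ faithfully, and fully faithfully in the quasi-separated case, inside $\QCoh$), this $\phi_k$-equivariant sheaf of algebras on $\drin(\stX)$ descends uniquely to a sheaf of algebras on $\stX$. Thus the quasi-affine covering scheme, its self-products, and all the groupoid structure descend from $\drin(\stX)$ to $\stX$.

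Third, I would reassemble the descended data into an actual map $\stX\to \stM$. The descended sheaf of algebras gives a relatively affine (or quasi-affine) scheme $W'\to \stX$ together with descended groupoid maps; one must check that the cocycle/descent conditions descend along with the maps, which follows from the full faithfulness half of the equivalence $\alpha_{\QCoh_f,\stX}$ applied on the fiber products. Because $\stM$ is a stack, effective descent along the fpqc covering $W'\to\stX$ then produces the desired map $\stX\to\stM$, and one verifies that applying $\alpha_{\stM,\stX}$ recovers the original $\psi$ up to canonical isomorphism, giving essential surjectivity. Full faithfulness of $\alpha_{\stM,\stX}$ follows by the same mechanism applied to morphisms: a morphism of maps into $\stM$ is again captured by compatible data on the quasi-affine coverings, which descends by~(1); alternatively one invokes the general descent machinery of Corollary~\ref{local equivalences are goog} together with the affine case.

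The hard part will be step two: controlling the quasi-affine (rather than merely affine) coverings and verifying that the $\phi_k$-equivariant algebra data genuinely descends through the $\ind(\QCoh_f)$ formalism. The quasi-affine diagonal means the self-intersections $\widetilde W\times_{\drin(\stX)}\widetilde W$ need not be affine, so one cannot bluntly apply Remark~\ref{the affine case} to the whole groupoid; instead one must carefully thread the algebra-sheaf description through hypothesis~(3) and Lemma~\ref{ind coherent in quasi-coherent}, ensuring that the open immersion cutting out a quasi-affine scheme inside an affine one is itself $\phi_k$-equivariant and descends — here Theorem~\ref{the case of closed immersion} on immersions is the natural tool. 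Managing the bookkeeping of these compatibilities, rather than any single deep input, is where the real effort lies.
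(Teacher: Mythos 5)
Your proposal follows essentially the same route as the paper's proof: pull back the affine covering $\tilde W\to\stM$ along $\psi$, use hypothesis (3) together with the $\QCoh_f$-equivalence of hypothesis (1) (and Lemma \ref{ind coherent in quasi-coherent}) to descend the pushforward algebra of its affine hull to a relatively affine $V\to\stX$, use Theorem \ref{the case of closed immersion} to descend the quasi-compact open immersion cutting out the quasi-affine covering, and conclude by descent; the only difference is bookkeeping, since the paper glues via Lemma \ref{essential surjectivity main lemma} (which transports the descent datum through full faithfulness) instead of explicitly descending the whole groupoid and its cocycle condition. One small correction: full faithfulness is obtained in the paper from Lemma \ref{fully faithful for separated diagonal} (the quasi-affine diagonal makes the Hom-sheaves fpqc sheaves with immersion diagonal), whereas your suggested alternative via Corollary \ref{local equivalences are goog} plus Remark \ref{the affine case} does not apply as stated, because \ref{local equivalences are goog} requires a covering of the source $\stX$ on which full faithfulness is already known and \ref{the affine case} concerns affine \emph{targets}.
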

\begin{proof}
By Lemma \ref{fully faithful for separated diagonal} we already know that $\alpha_{\stM,\stX}$ is fully faithful.

By hypothesis there is an fpqc (resp. fppf, \'etale) covering
$f\colon \tilde W=\Spec A\to \stM$. Since $\sM$ has quasi-affine
diagonal, it follows that $f$ is quasi-affine. Therefore,  if
$W\coloneq\Spec f_*\odi{\tilde W}$ then $\tilde W\to W$ is a
quasi-compact open immersion
\cite[\href{https://stacks.math.columbia.edu/tag/01SM}{01SM}]{stacks-project}. Now consider a functor $\psi\colon \drin(\stX)\to \stM$ and the Cartesian diagram 
   \[
  \begin{tikzpicture}[xscale=2.0,yscale=-1.2]
    \node (A0_0) at (0, 0) {$B$};
    \node (A0_1) at (1, 0) {$W$};
    \node (A1_0) at (0, 1) {$\drin(\stX)$};
    \node (A1_1) at (1, 1) {$\stM$};
    \path (A0_0) edge [->]node [auto] {$\scriptstyle{}$} (A0_1);
    \path (A1_0) edge [->]node [auto] {$\scriptstyle{\psi}$} (A1_1);
    \path (A0_1) edge [->]node [auto] {$\scriptstyle{}$} (A1_1);
    \path (A0_0) edge [->]node [auto] {$\scriptstyle{g}$} (A1_0);
  \end{tikzpicture}
  \]
  Then by hypothesis $\psi^*f_*\sO_{\tilde{W}}\simeq g_*\odi B$ is the colimit of a ring
  object in $\ind (\QCoh_f(\drin(\stX)))$. 
  We have a commutative diagram
  \[
  \begin{tikzpicture}[xscale=3.9,yscale=-1.2]
    \node (A0_0) at (0, 0) {$\ind(\QCoh_f(\stX))$};
    \node (A0_1) at (1, 0) {$\QCoh(\stX)$};
    \node (A1_0) at (0, 1) {$\ind(\QCoh_f(\drin(\stX)))$};
    \node (A1_1) at (1, 1) {$\QCoh(\drin(\stX))$};
    \path (A0_0) edge [->]node [auto] {$\scriptstyle{}$} (A0_1);
    \path (A0_0) edge [->]node [auto] {$\scriptstyle{}$} (A1_0);
    \path (A0_1) edge [->]node [auto] {$\scriptstyle{}$} (A1_1);
    \path (A1_0) edge [->]node [auto] {$\scriptstyle{}$} (A1_1);
  \end{tikzpicture}
  \]
Since the first vertical map is an equivalence by hypothesis, there is a ring object in $\ind(\QCoh_f(\stX))$ inducing a quasi-coherent sheaf of algebra on $\stX$ and therefore an affine map $V\to \stX$ with a Cartesian diagram
  \[
  \begin{tikzpicture}[xscale=2.0,yscale=-1.2]
    \node (A0_0) at (0, 0) {$B$};
    \node (A0_1) at (1, 0) {$V$};
    \node (A1_0) at (0, 1) {$\drin(\stX)$};
    \node (A1_1) at (1, 1) {$\stX$};
    \path (A0_0) edge [->]node [auto] {$\scriptstyle{}$} (A0_1);
    \path (A0_0) edge [->]node [auto] {$\scriptstyle{g}$} (A1_0);
    \path (A0_1) edge [->]node [auto] {$\scriptstyle{}$} (A1_1);
    \path (A1_0) edge [->]node [auto] {$\scriptstyle{}$} (A1_1);
  \end{tikzpicture}
  \]
In particular $B\simeq \drin(V)$. Consider the Cartesian diagrams on the left in
  \[
  \begin{tikzpicture}[xscale=1.5,yscale=-0.6]
    \node (A0_0) at (0, 0) {$\tilde B$};
    \node (A0_1) at (1, 0) {$\tilde W$};
    \node (A0_3) at (3, 0) {$\tilde B$};
    \node (A0_4) at (4, 0) {$\tilde V$};
    \node (A2_0) at (0, 2) {$\drin(V)$};
    \node (A2_1) at (1, 2) {$W$};
    \node (A2_2) at (2, 2) {$\then$};
    \node (A2_3) at (3, 2) {$\drin(V)$};
    \node (A2_4) at (4, 2) {$V$};
    \node (A4_0) at (0, 4) {$\drin(\sX)$};
    \node (A4_1) at (1, 4) {$\stM$};
    \node (A4_3) at (3, 4) {$\drin(\sX)$};
    \node (A4_4) at (4, 4) {$\sX$};
    \path (A4_0) edge [->]node [auto] {$\scriptstyle{}$} (A4_1);
    \path (A2_1) edge [->]node [auto] {$\scriptstyle{}$} (A4_1);
    \path (A0_3) edge [->]node [auto] {$\scriptstyle{}$} (A2_3);
    \path (A2_4) edge [->]node [auto] {$\scriptstyle{}$} (A4_4);
    \path (A0_0) edge [->]node [auto] {$\scriptstyle{}$} (A0_1);
    \path (A0_1) edge [->]node [auto] {$\scriptstyle{}$} (A2_1);
    \path (A0_4) edge [->]node [auto] {$\scriptstyle{}$} (A2_4);
    \path (A2_0) edge [->]node [auto] {$\scriptstyle{}$} (A4_0);
    \path (A2_3) edge [->]node [auto] {$\scriptstyle{}$} (A2_4);
    \path (A0_3) edge [->]node [auto] {$\scriptstyle{}$} (A0_4);
    \path (A4_3) edge [->]node [auto] {$\scriptstyle{}$} (A4_4);
    \path (A0_0) edge [->]node [auto] {$\scriptstyle{}$} (A2_0);
    \path (A2_0) edge [->]node [auto] {$\scriptstyle{}$} (A2_1);
    \path (A2_3) edge [->]node [auto] {$\scriptstyle{}$} (A4_3);
  \end{tikzpicture}
  \]
Applying Theorem \ref{the case of closed immersion} to the immersion $\tilde B\to \drin(V)$ we obtain the Cartesian diagrams on the right for some immersion $\tilde V\to V$. Since $\tilde V_k\to \stX_k$ is a base change of $\tilde{W}\to \stM$ which is an fpqc (resp. fppf, \'etale) covering, it follows that $\tilde V\to \stX$ is an fpqc (resp. fppf, \'etale) covering as well. From Lemma \ref{essential surjectivity main lemma} with $B=\tilde W$
 and Remark \ref{the affine case} we obtain the desired map $\sX\to \stM$.
\end{proof}
\begin{rmk}\label{filtered limit}Thanks to  Lemma \ref{ind coherent in
    quasi-coherent},
    condition (C) of Lemma \ref{general for proper and
    stacks} is satisfied when
\begin{itemize}
\item $\sM$ is a quasi-compact and quasi-separated algebraic
 stack and each of finite type sheaf on $\drin(\sX)$ is finitely
 presented. In this case, each object in
 $\QCoh(\sM)$ is the union of its quasi-coherent subsheaves of finite
 type (cf.~\cite{Rydh16}). 

\item $\sM$ is an affine gerbe over some field extension of $\F_q$. In this
    case,  each
    quasi-coherent sheaf is a filtered direct limit of vector
    bundles.
\end{itemize}
\end{rmk}

\begin{proof}[Proof of Theorem \ref{thm:the proper case2}, {(1)} $\implies$ 
    (2), (3), (4)]
 According to Lemma \ref{general for proper and stacks}  
 we just have to show that assuming (A), conditions (B), (C) of Lemma \ref{general for proper
 and stacks} hold in the situations of Theorem \ref{thm:the proper case2}, (2), (3), (4).
 Condition (B) for (2), (3) is due to
 \cite[\href{https://stacks.math.columbia.edu/tag/0GRH}{0GRH}]{stacks-project}  while (C) is
due to Remark \ref{filtered limit}.
\end{proof}

We now come back to the proof of Theorem \ref{thm:the proper case2}, (1). 
 The main result that we are going to use is the following, which is a hugely
 simplified version of \textup{\cite[Theorem B]{Ton20}}.

\begin{prop}\label{sheafification of linear
    functors}
Let $\stX$ be a quasi-compact category fibered in groupoid over
a ring $R$, $A$ be an $R$-algebra, $\shC\subseteq \QCoh_f(\stX)$ be a full
subcategory generating $\QCoh(\stX)$ and stable by finite direct sums and
denote by $\L_R(\shC,A)$ the category of contravariant $R$-linear functors
$\shC\to \Mod(A)$. Set $\sX_A\coloneqq \sX\times_RA$ and $u\colon
\sX_A\to\sX$ the projection.
Then the functor
\[
    \Gamma_{A/R}^{\sX}\colon \QCoh(\stX_A)\to \L_R(\shC,A)\comma \shF
    \longmapsto \Gamma_\shF=\Hom_{\stX_A}(u^*(-),\shF)
\]
is an equivalence onto the full subcategory
of functors which are exact on right exact sequences
in $\sC$ (left-exact functors).
\end{prop}

\begin{proof}[Proof (sketch)] The functor $\Gamma_{A/R}^{\sX}$ admits a
    left adjoint \(\int^{\sC}\colon
    \L_R(\sC,A)\to\QCoh(\sX_A)\) obtained using the ``coend
    construction'':
    Given $\Gamma\in\L_R(\sC,A)$, let $\Gamma_{\sE}$ denote the $A$-module
    associated with $\sE\in\sC$, then
    $\int^{\sC}\Gamma=\int^{\sE\in\sC}\Gamma_{\sE}\in\QCoh(\sX_A)$ is the cokernel of the
    map:\begin{equation*}\label{coend}
        \bigoplus_{\sE\xrightarrow{a}\sE'}(\Gamma_a\otimes\id_{u^*\sE}-\id_{\Gamma_{\sE'}}\otimes
        u^*a)\colon
        \bigoplus_{\sE\xrightarrow{a}\sE'}\Gamma_{\sE'}\otimes_Au^*\sE\arr
        \bigoplus_{\sE\in\sC}\Gamma_{\sE}\otimes_Au^*\sE
    \end{equation*}
    Indeed, one checks readily that there is a natural isomorphism:
    \[
        \Hom_{\QCoh(\sX_A)}(\int^{\sC}\Gamma,\sH)\xrightarrow{\ \cong\
        }\Hom_{\L_R(\sC,A)}(\Gamma,\Hom_{\sX_A}(u^*(-),\sH)),\,
        \forall\, \sH\in\QCoh(\sX_A)%,\,\Gamma\in\L_R(\sC,A)
    \]
    Actually, the restriction of $\int^{\sC}$ to the full subcategory of
    left-exact functors in $\L_{R}(\sC,A)$ is a quasi-inverse of
    $\Gamma_{A/R}^{\sX}$. To check this, one has to check
    \begin{enumerate}[label=(\alph*)]\item the counit
    $\epsilon$ of
    the adjunction is an isomorphism; \item the unit $\eta$, when restricted to the
left-exact functors, is an isomorphism.\end{enumerate} Since $\sX$ is quasi-compact,
    there is an fpqc-atlas $U=\Spec(B)\to \sX$. Fact (a) in this setting is somewhat tautological
    (cf.~\cite[Lemma 3.5, 3.6]{Ton20}). For example, for any
    $\sG\in\QCoh(\sX_A)$ the counit $\epsilon_{\sG}$
    is induced by the map
    \(\bigoplus_{\sE\in\sC}\Hom_{\sX_A}(u^*\sE,\sG)\otimes_A
        u^*\sE\to
    \sG\).
It is surjective because $u_*\sG\in \QCoh(\sX)$ and $\sC$ generates
$\QCoh(\sX)$.
    For (b), let
$\Gamma_{\sE}\xrightarrow{\eta_{\sE}}
    \Hom_{\sX_A}(u^*\sE,\int^{\sC}\Gamma)$ be the unit map evaluated
    at $\sE\in\sC$, and let $J_{B,\sC}$ be the category of pairs
    $(\sE,\psi)$ with $\sE\in\sC$, $\psi\in\sE(U)$. Then
    $\int^{\sC}\Gamma(U)=\displaystyle\varinjlim_{(\sE,\psi)\in
    J_{B,\sC}}\Gamma_{\sE}$ (cf.~\cite[Prop. 2.20]{Ton20}). For any $x\in
    \Ker(\eta_{\sE})$, $\psi\in\sE(U)$, the image of $x\otimes u^*\psi$ in
    $\int^{\sC}\Gamma(U)$ is
    equal to $\eta_{\sE}(x)(u^*\psi)=0$. Thus $\exists\ 
    (\sE,\psi)\xrightarrow{\mu}(\sE',\psi')\in J_{B,\sC}$ such that
    $\Gamma_{\mu}(x)=0$. In this way, 
     one finds a surjection $\oplus_{1\leq i\leq n}\mu_i\colon
    \bigoplus_{1\leq i\leq n}\sE_i\twoheadrightarrow \sE$, where $\sE_i\in\sC$,
    satisfying $\Gamma_{\mu_i}(x)=0\in\Gamma_{\sE_i}$. Then the left-exactness of
    $\Gamma$ implies
    that $x=0$, so $\eta$ is injective. To show that $\eta$ is
    surjective we take its cokernel $\Pi$ and want to show that
    $\Pi_{\sE}=0,\,\forall\,\sE\in\sC$. Writing 
    $\int^{\sC}\Gamma(U)$ as the colimit one sees that
    the sequence
    \(
        0\arr \int^{\sC}\Gamma \xrightarrow{\int^{\sC}\eta}
        \int^{\sC}\Gamma_{A/R}^{\sX}(\int^{\sC}\Gamma)\arr\int^{\sC}\Pi\arr
        0
    \) is exact.
    Note that $\int^{\sC}\eta$ has a retraction
    $\epsilon_{\int^{\sC}\Gamma}$ which is an isomorphism. This implies
    that $\int^{\sC}\Pi=0$. Now take any $x\in\Pi_{\sE}$, we have
    $\eta_{\sE}(x)=0$. Thus we can find a surjection
    $\oplus_{1\leq i\leq n}\mu_i\colon
    \bigoplus_{1\leq i\leq n}\sE_i\twoheadrightarrow \sE$ so that
    $\Gamma_{\mu_i}(x)=0$. A simple
    diagram chasing as in \cite[Lemma 4.7]{Ton20} shows that
    $x=0\in\Pi_{\sE}$. 
 \end{proof}

\begin{rmk}%\textup{\cite[Exposé XXII, Prop. 1.1, p. 403]{SGA7-2}, \cite[Lemma 4, p. 43]{Laf97}} \label{thm:case of base field F} 
    Applying Theorem \ref{Drinfeld-Lau Descent} to $\sX=\Spec(\F_q)$ we
    get an equivalence
    \(     \Vect(\Fq) \xrightarrow{\simeq}
    \Vect(k)^{\phi_k}=\Vect(\drin(\F_q))\)
whose quasi-inverse maps an object $(V,\sigma)$, where $V\in \Vect(k)$ and $\sigma\colon V\to V$ is a $\phi_k$-linear automorphism, to the $\Fq$-vector space 
 \(
 V^{\sigma}=\left\{ v\in V\mid \sigma(v)=v \right\}
 \).
 \end{rmk}

 \begin{proof}[Proof of Theorem \ref{thm:the proper case2},
     \textup{(1)}]  We want to define a
    functor $\QCoh_f(\drin(\stX))\to \QCoh_f(\stX)$ using
    Prop. \ref{sheafification of linear functors}. Consider an
    object in $\QCoh_f(\drin(\stX))$, which we think of as a
    pair $(\shF,\sigma)$ where $\shF\in \QCoh_f(\stX_k)$ and
    $\sigma\colon \phi_{k}^*\shF \to \shF$ is an isomorphism.
    With such a pair we can associate the $\Fq$-linear functor
    \(
    \Gamma_\shF=\Hom_{\stX_k}(-\otimes_\Fq k,\shF) \colon \shC\to \Vect(k)
    \) and a natural
    isomorphism $\Gamma_{\sigma}\colon\Gamma_{\phi_{k}^*\shF} \to \Gamma_{\shF}$. Notice that
\(
\Gamma_{\phi_{k}^*\shF}(\shE)=\phi_k^*(\Gamma_\shF(\shE)) \text{ for
}\forall\, \sE\in \shC 
\), so
 we conclude that $(\shF,\sigma)$ defines a functor $\Gamma_{(\shF,\sigma)}\colon \shC\to \Vect(\drin(\Fq))$. Via the  equivalence 
 \(\Vect(\Fq)\xrightarrow{\cong} \Vect(\drin(\Fq))\)
we obtain a functor
\(
    \Omega\colon \QCoh_f(\drin(\stX))\longrightarrow \L_{\F_q}(\shC,\Fq)
\)
sending
\(
(\sF,\sigma)\longmapsto \Omega_{(\sF,\sigma)}
\). There is also 
an isomorphism $\Omega_{(\sF,\sigma)}\otimes_\Fq k \xrightarrow{\cong}
\Gamma_\shF$ which is compatible with
$\id_{\Omega}\otimes \phi_k$ and $\Gamma_{\sigma}$. Notice moreover that by construction the composition
\begin{equation}\label{compare Omega and Gamma}
    \QCoh_f(\stX)\arr
    \QCoh_f(\drin(\stX))\xrightarrow{\ \Omega\ } \L_{\F_q}(\shC,\Fq)
\end{equation}
is nothing but $\Gamma_{\F_q/\F_q}^{\sX}$. Using Prop. \ref{sheafification of linear
functors}, we conclude the proof by the following lemma.\end{proof}
\begin{lem}
    The functor $\Omega \colon \QCoh_f(\drin(\stX))\to
    \L_{\F_q}(\shC,\Fq)$ is fully faithful and its essential image is the same
    as that of $\QCoh_f(\sX)$ under $\Gamma_{\F_q/\F_q}^{\sX}$.
\end{lem}
\begin{proof}
 Let $\shF=(\shF,\sigma),\,\shG=(\shG,\delta)\in \QCoh_f(\drin(\stX))$. Since $\Omega_{(\sF,\sigma)}\otimes_\Fq k \xrightarrow{\cong} 
 \Gamma_\shF$, $\Omega_{(\sF,\sigma)}$ is left-exact, by Prop. \ref{sheafification of linear
 functors} there is $\sF_0\in\QCoh(\sX)$ such that
 $\Omega_{(\shF,\sigma)}\cong\Gamma_{\sF_0}$. The fact that
 \[\Gamma_\shF\cong\Omega_{(\sF,\sigma)}\otimes_\Fq k
     \cong\Gamma_{\sF_0}\otimes_{\F_q}k\cong
 \Gamma_{\sF_0\otimes_{\F_q}k} 
\] implies that $\sF_0\in\QCoh_f(\sX)$. This proves the second claim. 
 
 In view of Prop. \ref{sheafification of linear functors} the functor
 \(
     \Gamma_{k/\F_q}^{\sX}\colon \QCoh(\stX_k)\to \L_\Fq(\shC,k)
 \)
 is fully faithful. By construction $\Omega$ is also faithful. % This easily implies that $\Omega$ is faithful.
  Given a map $\beta\colon \Omega_{(\shF,\sigma)}\to \Omega_{(\shG,\delta)}$,
  we may interpret it, via the equivalence $\Vect(\Fq) \xrightarrow{\simeq}
  \Vect(\drin(\Fq))$, as a map $\beta_k\colon \Gamma_\shF\to \Gamma_\shG$
  such that
  $\beta_k\circ\Gamma_{\sigma}=\Gamma_{\delta}\circ\phi_k^*\beta_k$.
     \[
  \begin{tikzpicture}[xscale=3.0,yscale=-1.2]
    \node (A0_0) at (0, 0) {$\Gamma_{\phi_k^*\shF}=\phi_k^*\Gamma_\shF$};
    \node (A0_1) at (1, 0) {$\Gamma_\shF$};
    \node (A1_0) at (0, 1) {$\Gamma_{\phi_k^*\shG}=\phi_k^*\Gamma_\shG$};
    \node (A1_1) at (1, 1) {$\Gamma_\shG$};
    \path (A0_0) edge [->]node [auto] {$\scriptstyle{\Gamma_\sigma}$} (A0_1);
    \path (A0_0) edge [->]node [auto]
    {$\scriptstyle{\phi_k^*\beta_k}$} (A1_0);
    \path (A0_1) edge [->]node [auto]
    {$\scriptstyle{\beta_k}$} (A1_1);
    \path (A1_0) edge [->]node [auto] {$\scriptstyle{\Gamma_\delta}$} (A1_1);
  \end{tikzpicture}
  \]
By Prop. \ref{sheafification of linear functors}, $\beta_k=\Gamma_\zeta$
for a unique $\zeta\colon \shF\to \shG$. Moreover,
$\zeta\circ\sigma=\delta\circ \phi_k^*\zeta$ as required.
%\begin{tikzpicture}[xscale=2.0,yscale=-1.2]
%    \node (A0_0) at (0, 0) {$\phi_{k}^*\shF$};
%    \node (A0_1) at (1, 0) {$\shF$};
%    \node (A1_0) at (0, 1) {$\phi_{k}^*\shG$};
%    \node (A1_1) at (1, 1) {$\shG$};
%    \path (A0_0) edge [->]node [auto] {$\scriptstyle{\sigma}$} (A0_1);
%    \path (A0_0) edge [->]node [auto] {$\scriptstyle{\phi_{k}^*\zeta}$} (A1_0);
%    \path (A0_1) edge [->]node [auto]
%        {$\scriptstyle{\zeta}$} (A1_1);
%    \path (A1_0) edge [->]node [auto] {$\scriptstyle{\delta}$} (A1_1);
%  \end{tikzpicture}
%  \] This concludes the proof.
\end{proof}

\section{The etal\'e case}

In this section we consider the case when $\stM=\Et$ is the
stack over $\Fq$ of representable, \'etale, quasi-compact and quasi-separated
morphisms from algebraic spaces. This is an fppf-stack
(cf.~\cite[\href{https://stacks.math.columbia.edu/tag/0ADV}{0ADV}]{stacks-project}). Let $\stM=\Et_s$ denote
the substack of $\Et$ of \'etale maps which are separated. This is an
fpqc-stack
(cf.~\cite[\href{https://stacks.math.columbia.edu/tag/0246}{0246},
\href{https://stacks.math.columbia.edu/tag/02LR}{02LR}]{stacks-project}). The goal is to prove Theorem \ref{etale separated thm}, (C).
We use the abbreviation \textit{qcqs} for
    quasi-compact and quasi-separated, and \textit{qcs} for
    quasi-compact and separated.

If $\stX$ is a category fibered in groupoids then $\Et(\stX)$
(resp. $\Et_s(\sX)$) is
the category of maps $\stV\to \stX$ which are representable,
\'etale and qcqs (resp. qcs). Arrows are any morphisms
between such objects: $\Et$ and $\Et_s$ are not fibered in groupoids.
Since locally quasi-finite and separated morphisms of algebraic
spaces are schematically representable (see
\cite[\href{https://stacks.math.columbia.edu/tag/03XX}{03XX}]{stacks-project}),
it follows that if $V\to X$ is an object of $\Et_s(X)$ and $X$ is a scheme then also $V$ is a scheme.

We denote by $\textup{FP}$ the fibered category over $\Z$ of finitely presented (in particular qcqs) morphisms from algebraic spaces: if $\stX$ is a category fibered in gropoids then $\textup{FP}(\stX)=\Hom(\stX,\textup{FP})$ is the category of representable morphisms $\stY\to \stX$ which are finitely presented.
\begin{prop}\label{limits and separated}
   Let $A=\varinjlim_i A_i$ be a filtered direct limit of rings.
Then the natural functor
 \[
 \varinjlim_i \textup{FP}(A_i) \arr \textup{FP}(A)
 \]
 is an equivalence and it preserves all the following properties of morphisms: schematic, affine, separated, smooth, \'etale.
 \end{prop}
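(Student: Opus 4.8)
The statement is the ``spreading out'' (limit) formalism for algebraic spaces of finite presentation, generalising Grothendieck's results for schemes (EGA IV, \S8) to algebraic spaces. The plan is to treat the existence of the functor, its full faithfulness, its essential surjectivity, and the descent of properties separately, in each case reducing to an affine scheme as source or target and then invoking the scheme-theoretic limit formalism together with its algebraic-space version in \cite{stacks-project}. The functor itself is immediate: a transition map $A_i\to A_j$ induces a base-change functor $\textup{FP}(A_i)\to \textup{FP}(A_j)$, and since finite presentation is stable under base change these assemble into $\varinjlim_i \textup{FP}(A_i)\to \textup{FP}(A)$, sending $Y_i$ to $Y_{i,A}\coloneqq Y_i\times_{\Spec A_i}\Spec A$.

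For full faithfulness I would fix $Y_i,Z_i\in \textup{FP}(A_i)$, write $Y_{i,j}\coloneqq Y_i\times_{\Spec A_i}\Spec A_j$, and prove that
\[
\varinjlim_{j\ge i}\Hom_{A_j}(Y_{i,j},Z_{i,j})\arr \Hom_A(Y_{i,A},Z_{i,A})
\]
is bijective. Choosing a surjective \'etale atlas $U_i\to Y_i$ with $U_i$ an affine scheme --- possible because $Y_i$ is quasi-compact and quasi-separated, being of finite presentation --- an $A_j$-morphism out of $Y_{i,j}$ is the same datum as a morphism $U_{i,j}\to Z_{i,j}$ equalising the two projections from $R_{i,j}=U_{i,j}\times_{Y_{i,j}}U_{i,j}$. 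Since the source $U_{i,j}$ is now an affine scheme, bijectivity of the corresponding Hom-colimit (morphisms from an affine scheme into a finitely presented algebraic space, together with the finitely many equalising conditions) is the limit formalism of EGA IV, \S8 and its algebraic-space analogue in \cite{stacks-project}. Faithfulness is the injectivity here, and fullness will follow by combining surjectivity with the essential surjectivity below.

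The heart of the matter is essential surjectivity. Given $Y\to \Spec A$ of finite presentation, $Y$ is a quasi-compact quasi-separated algebraic space, so I can present it as the quotient $Y=U/R$ of an \'etale equivalence relation with $U$ an affine scheme and $R=U\times_Y U$ a quasi-compact quasi-separated algebraic space, both finitely presented over $\Spec A$. The affine scheme $U$ descends to some $U_i/A_i$; descending $R$ needs the algebraic-space case again (via its own affine atlas), after which the structure maps $s,t\colon R\to U$ and the groupoid composition descend by the full faithfulness just established. After enlarging $i$ one checks that $(U_i,R_i,s_i,t_i)$ is again an \'etale equivalence relation --- being an equivalence relation, and the projections being \'etale and surjective, are conditions that spread out --- and sets $Y_i\coloneqq U_i/R_i$. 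Because formation of a quotient by an \'etale equivalence relation commutes with the flat base change $\Spec A\to \Spec A_i$, one gets $Y_{i,A}\simeq U/R=Y$. The delicate point, and the one I expect to be the main obstacle, is precisely this descent of the equivalence relation $R$ (itself only an algebraic space) together with the verification that the quotient formed over $A_i$ base-changes back to $Y$; this is exactly the content of the ``Limits of Spaces'' results in \cite{stacks-project}.

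Finally, each listed property is stable under base change, hence is automatically preserved in the direction $\textup{FP}(A_i)\to \textup{FP}(A)$; for the converse I would use that it ``spreads out'': if $Y_{i,A}\to \Spec A$ is affine (resp.\ separated, smooth, \'etale, a scheme) then so is $Y_{i,j}\to \Spec A_j$ for $j$ large. For affine and separated this is EGA IV, 8.10.5; for smooth and \'etale it is EGA IV, \S17 (e.g.\ 17.7.8); and for ``schematic'' --- which over the affine base $\Spec A$ simply means that $Y$ is a scheme --- it is the spreading-out of the property of being a scheme among finitely presented algebraic spaces. In each case the algebraic-space version is in \cite{stacks-project}, and combining stability under base change with spreading out shows the equivalence preserves the property in both directions.
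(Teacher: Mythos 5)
Your proposal is correct and follows essentially the same route as the paper: the paper's proof consists entirely of citations to the limit formalism for algebraic spaces in the Stacks Project (tag 07SK for the equivalence, and 01ZM, 01ZN, 0851, 0CN2, 07SL for schematic, affine, separated, smooth and \'etale respectively), and what you have written is a sketch of how those cited results are themselves proved, via reduction to affine atlases and the scheme-theoretic case of EGA IV, \S 8. There is no gap; the only difference is that you re-derive in outline what the paper takes as a black box.
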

\begin{proof}
The first claim is
\cite[\href{https://stacks.math.columbia.edu/tag/07SK}{07SK}]{stacks-project}. For the other ones we have instead: schematic by  \cite[\href{https://stacks.math.columbia.edu/tag/01ZM}{01ZM}]{stacks-project}; affine by  \cite[\href{https://stacks.math.columbia.edu/tag/01ZN}{01ZN}]{stacks-project}; separated by  \cite[\href{https://stacks.math.columbia.edu/tag/0851}{0851}]{stacks-project}; smooth by  \cite[\href{https://stacks.math.columbia.edu/tag/0CN2}{0CN2}]{stacks-project}; \'etale by \cite[\href{https://stacks.math.columbia.edu/tag/07SL}{07SL}]{stacks-project}.
\end{proof}

\begin{proof}[Proof of Theorem \ref{etale separated thm}, {\rm(C)}]

By Lemma \ref{from schemes to fibered categories} we can assume that $\stX=X=\Spec A$ is affine.
 An object of $\Et_s(\drin(A))$ is a pair $(V,\sigma)$ where
     $V\to \Spec(A_k)$ is an étale and qcs map of schemes and $\sigma\colon V\to V$ is an isomorphism making the following diagram commutative
   \[
  \begin{tikzpicture}[xscale=2.9,yscale=-1.2]
    \node (A0_0) at (0, 0) {$V$};
    \node (A0_1) at (1, 0) {$V$};
    \node (A1_0) at (0, 1) {$\Spec(A_k)$};
    \node (A1_1) at (1, 1) {$\Spec(A_k)$};
    \path (A0_0) edge [->]node [auto] {$\scriptstyle{\sigma}$} (A0_1);
    \path (A1_0) edge [->]node [auto] {$\scriptstyle{\phi_k}$} (A1_1);
    \path (A0_1) edge [->]node [auto] {$\scriptstyle{}$} (A1_1);
    \path (A0_0) edge [->]node [auto] {$\scriptstyle{}$} (A1_0);
  \end{tikzpicture}
  \]

 Let $\{A_i\}_i$ be the set of all $\Fq$-subalgebras of $A$ which are of finite type over $\Fq$, so that $A$ is a filtered direct limit of the $A_i$. We have that
 \[
     \varinjlim_i \Et_s(A_i) \to \Et_s(A)\hspace{20pt}
     \text{and}\hspace{20pt} \varinjlim_i \Et_s((A_i)_k) \to
     \Et_s(A_k)
 \]
 are equivalences thanks to Prop. \ref{limits and separated}.  Therefore  the functor
 \[
 \varinjlim_i \Et_s(\drin(A_i)) \to \Et_s(\drin(A))
 \]
 is an equivalence. In particular we can assume that $A$ is of finite type over $\Fq$.
 
 Let $\overline X$ be any compactification of $X=\Spec A$, so that $X$ is an open subset of $\overline X$. Composing along $X\to \overline X$ we get a commutative diagram
   \[
  \begin{tikzpicture}[xscale=2.9,yscale=-1.2]
    \node (A0_0) at (0, 0) {$\Et_s(X)$};
    \node (A0_1) at (1, 0) {$\Et_s(\drin(X))$};
    \node (A1_0) at (0, 1) {$\Et_s(\overline X)$};
    \node (A1_1) at (1, 1) {$\Et_s(\drin(\overline X))$};
    \path (A0_0) edge [->]node [auto] {$\scriptstyle{}$} (A0_1);
    \path (A0_0) edge [->]node [auto] {$\scriptstyle{}$} (A1_0);
    \path (A0_1) edge [->]node [auto] {$\scriptstyle{}$} (A1_1);
    \path (A1_0) edge [->]node [auto] {$\scriptstyle{}$} (A1_1);
  \end{tikzpicture}
  \]
The vertical functor are fully faithful. It is easy to see that the result for $\overline X$ implies the result for $X$. Thus we can assume that $X$ is a projective scheme over
$\Fq$. Since the small \'etale site does not change under a
nilpotent closed immersion, we can moreover assume that $X$ is
reduced. In conclusion we can assume that $X$ is a projective
variety over $\Fq$.

By
\cite[\href{https://stacks.math.columbia.edu/tag/02LR}{02LR}]{stacks-project}
and
\cite[\href{https://stacks.math.columbia.edu/tag/03GR}{03GR}]{stacks-project}
 if $Z$ is a reduced scheme of finite type over some field (hence Nagata) and $V\to Z$ is a map of schemes \'etale and separated then the normalization $ N(V)\to Z$ of $Z$ in $V$ is finite and $V\to N(V)$ is an open immersion.

 Let $\textup{Fin}$ denote the stack of finite morphisms.
 By the functoriality of normalizations
 \cite[\href{https://stacks.math.columbia.edu/tag/035J}{035J}]{stacks-project}
 we get a functor
 $$\Et_s(X)\arr \textup{Fin}(X)$$
 Again by functoriality  \cite[\href{https://stacks.math.columbia.edu/tag/035J}{035J}]{stacks-project}, if $(V\to X_k,\sigma)\in \Et_s(\drin(X))$, then
 $\sigma\colon V\to V$ extends to an \textit{isomorphism} $N(\sigma)\colon
 N(V)\to N(V)$ over $\phi_k \colon X_k\to X_k$. Thus we get a morphism $\Et_s(\drin(X))\to \textup{Fin}(\drin(X))$. We claim that the following diagram is commutative
   \[
  \begin{tikzpicture}[xscale=4,yscale=-1.2]
    \node (A0_0) at (0, 0) {$\Et_s(X)$};
    \node (A0_1) at (1, 0) {$\Et_s(\drin(X))$};
    \node (A1_0) at (0, 1) {$\textup{Fin}(X)$};
    \node (A1_1) at (1, 1) {$\textup{Fin}(\drin(X))$};
    \path (A0_0) edge [->]node [auto] {$\scriptstyle{\alpha_{\Et_s,X}}$} (A0_1);
    \path (A0_0) edge [->]node [auto] {$\scriptstyle{}$} (A1_0);
    \path (A0_1) edge [->]node [auto] {$\scriptstyle{}$} (A1_1);
    \path (A1_0) edge [->]node [auto] {$\scriptstyle{}$} (A1_1);
  \end{tikzpicture}
  \]
We are going to use that the bottom functor is an equivalence thanks to Theorem \ref{special X}, (1). Given $U \to X$ in $\Et_s(X)$ we can construct the following Cartesian diagrams:
   \[
  \begin{tikzpicture}[xscale=2.1,yscale=-1.2]
    \node (A0_0) at (0, 0) {$U_k$};
    \node (A0_1) at (1, 0) {$N(U_k)$};
    \node (A0_2) at (2, 0) {$N(U)_k$};
    \node (A0_3) at (3, 0) {$X_k$};
    \node (A1_0) at (0, 1) {$W$};
    \node (A1_1) at (1, 1) {$N$};
    \node (A1_2) at (2, 1) {$N(U)$};
    \node (A1_3) at (3, 1) {$X$};
    \path (A0_0) edge [->]node [auto] {$\scriptstyle{}$} (A0_1);
    \path (A0_1) edge [->]node [auto] {$\scriptstyle{}$} (A1_1);
    \path (A1_0) edge [->]node [auto] {$\scriptstyle{}$} (A1_1);
    \path (A0_3) edge [->]node [auto] {$\scriptstyle{}$} (A1_3);
    \path (A1_1) edge [->]node [auto] {$\scriptstyle{u}$} (A1_2);
    \path (A0_2) edge [->]node [auto] {$\scriptstyle{}$} (A1_2);
    \path (A0_0) edge [->]node [auto] {$\scriptstyle{}$} (A1_0);
    \path (A0_1) edge [->]node [auto] {$\scriptstyle{v}$} (A0_2);
    \path (A1_2) edge [->]node [auto] {$\scriptstyle{}$} (A1_3);
    \path (A0_2) edge [->]node [auto] {$\scriptstyle{}$} (A0_3);
  \end{tikzpicture}
  \]
where the top arrows should be thought of morphisms over $\drin(X)$.
The finite map $N\to X$ and the morphism $u$ are obtained using that $\alpha_{\textup{Fin},X}$ is an equivalence. The map $W\to N$ is the open immersion obtained from Theorem \ref{the case of closed immersion}. Since $W\to N(U)$ pullback to an open immersion, it is an open immersion. Applying again Theorem \ref{the case of closed immersion} we can conclude that $W=U\to N(U)$ is the given open immersion.

By the universal property of normalization we can conclude that $N\to N(U)$ is an isomorphism.

We now show that $\alpha_{\Et_s,X}$ is an equivalence.

For the fully faithfulness, given $U\to X$ and $W\to X$ in $\Et_s(X)$ and a morphism $\drin(U)\to \drin(W)$, this map extends to a morphism $\drin(N(U))\to \drin(N(W))$ induced by a map $a\colon  N(U)\to N(W)$. We have $a(U)\subseteq W$ because this relation holds after pulling back to $N(W)_k$.

For the essential surjectivity, starting
 with $(V\to X_k,\sigma)\in \Et_s(\drin(X))$ we get
 $(N(V)\to X_k,N(\sigma))\in\textup{Fin}(\drin(X))$ which descents to
 $(N\to X) \in\textup{Fin}(X)$. Then we get
  the descent $U\subseteq N$ of the open embedding $V\subseteq
 N(V)=N_k$ by Theorem \ref{the case of closed immersion}.
\end{proof}

\begin{prop}
Let $\stX$ be a category fibered in groupoids over $\Fq$. Then the functor
\begin{equation}\label{functor for Et}
 \Et(\stX)\longrightarrow \Et(\drin(\stX))
\end{equation}
 is fully faithful.
\end{prop}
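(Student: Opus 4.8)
The plan is to show that $\alpha_{\Et,\stX}$ is faithful and full. By \ref{from schemes to fibered categories}, applied to the stack $\Et$ (which is a stack in the \'etale topology), it suffices to treat the case $\stX=X=\Spec A$ affine, so that objects of $\Et(X)$ are \'etale qcqs algebraic spaces over $X$. Faithfulness is immediate from \ref{main functor is faithful}, since $\Et$ is a prestack in the fpqc topology. Thus everything reduces to proving that $\alpha_{\Et,X}$ is full; note that, unlike for $\Et_s$ in Theorem \ref{etale separated thm}, essential surjectivity is neither needed nor expected here.

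For fullness I would fix $U,W\in\Et(X)$ and a morphism $g\colon\drin(U)\to\drin(W)$ over $\drin(X)$, and descend it to a morphism $U\to W$ over $X$. The idea is to reinterpret $g$ as a section. Set $E\coloneq U\times_X W$, an \'etale qcqs algebraic space over $U$; since $\drin(-)=(-)\times_\Fq\drin(\Fq)$ commutes with fibre products, $\drin(E)=\drin(U)\times_{\drin(X)}\drin(W)$, and the graph $\sigma=(\id,g)\colon\drin(U)\to\drin(E)$ is a section of the projection $\drin(E)\to\drin(U)$, with $g$ recovered as $\pr_W\circ\sigma$. The crucial point -- and the one place where the non-separated case genuinely differs from Theorem \ref{etale separated thm} -- is that such a section is automatically an \emph{open immersion}. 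To see this I would pull back along the fpqc cover $E_k\to\drin(E)$: the section $\sigma$ becomes the section $\sigma_k\colon U_k\to E_k$ of the \'etale morphism $E_k\to U_k$ of algebraic spaces, and a section of an \'etale (hence unramified) morphism is an open immersion, being a base change of the open diagonal $\Delta_{E_k/U_k}$. As open immersions are fpqc-local on the target, $\sigma$ itself is an open immersion.

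It then remains to descend $\sigma$. Applying Theorem \ref{the case of closed immersion} to the algebraic space $E=U\times_X W$ (a category fibered in groupoids over $\Fq$), the open immersion $\sigma\colon\drin(U)\hookrightarrow\drin(E)$ descends uniquely to an open immersion $Z\hookrightarrow E$, with $\drin(Z)$ identified with the image of $\sigma$. Composing with the projection $E\to U$ yields a map $Z\to U$ whose pullback along $\drin$ is an isomorphism (because $\sigma$ is a section of $\drin(E)\to\drin(U)$); base changing along the fpqc cover $U_k\to U$ and descending then shows $Z\to U$ is itself an isomorphism. Finally $f\colon U\xrightarrow{\sim}Z\hookrightarrow E\xrightarrow{\pr_W}W$ is a morphism over $X$ with $\drin(f)=g$, which proves fullness. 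The main obstacle is precisely the open-immersion property of sections of \'etale maps in the non-separated setting; once this is in place, the descent of the resulting open immersion via \ref{the case of closed immersion} and the fpqc-descent of the isomorphism $Z\to U$ are formal.
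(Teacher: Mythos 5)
Your argument is correct, but it follows a genuinely different route from the paper's. The paper proves fullness by reducing to the separated case: it chooses \'etale presentations $R\rightrightarrows U\to Y$ and of $Z$ by quasi-compact separated \'etale schemes, pushes these presentations and the induced morphism of groupoids through the equivalence of Theorem \ref{etale separated thm}, and concludes by \'etale descent; in particular it uses the full strength of Theorem \ref{etale separated thm}, whose proof rests on limit arguments, compactification, normalization and Drinfeld's lemma. You instead replace the morphism $g$ by its graph $\sigma=(\id,g)\colon\drin(U)\to\drin(U\times_X W)$ and observe that, because the diagonal of an \'etale (indeed unramified) morphism of algebraic spaces is an open immersion, any section of such a morphism is an open immersion; this is checked after the fpqc base change $E_k\to\drin(E)$, where $\sigma$ becomes the graph of $g_k$, and then transported back since being an open immersion is fpqc-local on the target. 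From there only Theorem \ref{the case of closed immersion} and fpqc descent of ``isomorphism'' along $U_k\to U$ are needed, and all of these steps check out ($\drin(-)$ commutes with fibre products, and the descended $Z\to U$ is an isomorphism because $\sigma$ is a section). What your approach buys is independence from Theorem \ref{etale separated thm} and hence from the heavier machinery behind it, together with a clean identification of why separatedness is irrelevant for fullness: sections of \'etale maps are open, not merely locally closed. What the paper's approach buys is uniformity: the groupoid-presentation technique it uses here is the same one needed elsewhere (e.g.\ for Theorem \ref{special M}, (2)), whereas your graph trick is specific to full faithfulness and, as you correctly note, says nothing about essential surjectivity.
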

\begin{proof}
    According to Lemma \ref{from schemes to fibered categories} we may assume that
    $\sX=X=\Spec A$. 
    
    Let $Y\to X$ be an object of $\Et(X)$, that is an \'etale
    and qcqs morphism of algebraic spaces and $U\to Y$ be a map from an algebraic space. If $(U\to X)\in \Et_s(X)$, then $(U\to Y)\in \Et_s(Y)$. Indeed $U\to Y$ is quasi-compact because $Y$ is quasi-separated and it is separated by \cite[\href{https://stacks.math.columbia.edu/tag/03KR}{03KR}]{stacks-project}.
    
    If $U\to Y$ is an \'etale atlas from a qcs scheme and $R=U\times_Y U$, then $R$ is qcs because $Y$ is quasi-separated and $R\to U\times_X U$ is a monomorphism. In particular $R\rightrightarrows U$ defines a groupoid in $\Et_s(X)$ but also in $\Et_s(Y)$.
    
    We come back to the problem of fully faithfulness. Faithfulness follows from Prop. \ref{main functor is faithful}.
    
    Suppose that $Z\arr X$ is another
    object in $\Et(X)$, and suppose that $\lambda\colon
    Y_k\arr Z_k$ comes from a morphism in $\Et(\drin(X))$. Let $R\rightrightarrows U\arr Y$ be an \'etale
    presentation by qcs \'etale schemes. A map $Y\arr Z$ is the
    same as a map $U\arr Z$ which equalizes $R\rightrightarrows
    U$. By Theorem \ref{etale separated thm}, (C) we        
    may assume that $Y$ is a qcs \'etale scheme over $X$.
    
    Now suppose that $R\rightrightarrows U\arr Z$ is an \'etale
    presentation by qcs \'etale schemes. Then the pullback
    $$R_k'\rightrightarrows U_k'\arr Y_k$$
    of
    $R_k\rightrightarrows U_k\arr Z_k$ along $\lambda$ is an
    \'etale presentation of $Y_k$. Moreover this presentation belongs to $\Et_s(Y_k)$. More precisely it defines a groupoid in $\Et_s(\drin(Y))$.
    By Theorem \ref{etale separated thm}, (C) it descend
    to an \'etale presentation $$R'\rightrightarrows U'\arr Y$$
    in $\Et_s(Y)$. On the other hand, since $Y\to X$ is separated, it follows that the presentation actually belongs to $\Et_s(X)$. 
    
    By construction the original morphism $\lambda\colon Y_k\to Z_k$ defines a morphism of the groupoid presentations in $\Et_s(\drin(X))$. By Theorem \ref{etale separated thm}, (C) this morphism induces a morphism of the corresponding groupoid presentations in $\Et_s(X)$. By
    \'etale
    descent we get a map $Y\arr Z$ inducing $\lambda$.
\end{proof}

\section{The non proper case}

The main of goal of this section is to finish the proof of Theorem \ref{special M}.

\begin{proof}[Proof of Theorem \ref{special M}, {\rm(A)}]
Let $\stM$ be a Deligne Mumford stack over $\Fq$ with qcs diagonal. In particular, by \ref{fully faithful for separated diagonal}, we know that the map $\alpha_{\stM,\stV}$ is fully faithful for all categories fibered in groupoids $\stV$ over $\Fq$.
So we are interested in the essential surjectivity.
 
 By \ref{from schemes to fibered categories} we can assume that $\stX=X=\Spec A$ is affine. 
Let $\drin(X)\to \stM$ be a map. 
 Since $\stM$ is a union of quasi-compact open substacks, we see
 that $\drin(X)\to \stM$ factors through one of this opens. In
 other words we can assume that $\stM$ is quasi-compact, that is
 there exists an \'etale map $W\to \stM$ where $W$ is an affine
 scheme. The condition on the diagonal of $\stM$ assures that
 $W\to \stM$ is an \'etale and qcs map. Putting together Theorem \ref{etale separated thm}, (C), \ref{essential surjectivity main lemma} with $B=W$ and \ref{the affine case} we obtain the desired map $X\to \stM$.
\end{proof}

\begin{proof}[Proof of Theorem \ref{special M}, {\rm(2)}]
 By \ref{from schemes to fibered categories} we can assume that $\stX=X=\Spec A$
 is affine. Let $\xi,\eta\in \stM(X)$, set
 $I=\Isosh_\stM(\xi,\eta)\to X$ and let $\drin(X)\to I$ be a
 map. Since $I$ is a quasi-separated algebraic space, by Theorem \ref{special M}, (B) we find a factorization $\drin(X)\to X \to I$. Since $\drin(X)\to I \to X$ is the canonical map, again Theorem \ref{special M}, (B) tells us that $X\to I \to X$ is the identity, as required. 
\end{proof}

\section{Applications}

\subsection{Affine gerbes over $\Fq$ are trivial}

\begin{thm}\label{affine gerbes are trivial}
 Affine gerbes over $\Fq$ are trivial.
\end{thm}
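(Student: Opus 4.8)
The plan is to apply Theorem \ref{thm:the proper case2}, case (3), with base $\stX=\Spec\Fq$ and target stack $\stM=\stG$ the given affine gerbe, and then to exhibit an object of $\stG(\drin(\Fq))$ by hand. First I would verify that $\Spec\Fq$ satisfies Conventions \ref{situation of proper case}: taking $\shC=\QCoh_f(\Spec\Fq)=\Vect(\Fq)$ one has $\stX_k=\Spec k$, the spaces $\Hom_{\Spec k}(\shE\otimes k,\shF)$ are finite dimensional for $\shE\in\Vect(\Fq)$ and $\shF\in\Vect(k)$, and every $k$-vector space is a quotient of a free one, so $\shC$ generates $\QCoh(\Spec\Fq)$. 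Since $\stG$ is an affine gerbe over a field, case (3) of Theorem \ref{thm:the proper case2} applies and
\[
\alpha_{\stG,\Spec\Fq}\colon \stG(\Spec\Fq)\to \stG(\drin(\Fq))
\]
is an equivalence. Hence $\stG$ acquires an $\Fq$-point, i.e. is trivial, as soon as $\stG(\drin(\Fq))$ is nonempty.

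Next I would produce an object of $\stG(\drin(\Fq))$, that is a $k$-point $x$ of $\stG_k$ together with an isomorphism $f\colon\phi_k^*x\to x$. Because $k$ is algebraically closed and $\stG$ is an affine gerbe, the base change $\stG_k$ is a neutral gerbe over $k$: writing $H=\Autsh(x)$ for the affine band one has $\stG_k\simeq\Bi_k H$, and in particular $\stG(k)\neq\emptyset$, which yields $x$. Since every $H$-torsor over the algebraically closed field $k$ is trivial, the groupoid $\stG(k)\simeq\Bi_k H(k)$ has a single isomorphism class; in particular $\phi_k^*x$ and $x$ are isomorphic, and any choice of isomorphism provides the required $f$. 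Thus $(x,f)\in\stG(\drin(\Fq))$, and the equivalence $\alpha_{\stG,\Spec\Fq}$ furnishes the desired $\Fq$-point of $\stG$.

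The main obstacle is the input at the level of $k$: the neutrality of an affine gerbe over the algebraically closed field $k$ together with the vanishing of $\Hl^1_{\textup{fppf}}(\Spec k,H)$ for the possibly non-smooth and possibly non-finite-type band $H$. For smooth algebraic $H$ these are classical (a smooth torsor over an algebraically closed field has a rational point, and neutrality follows); for infinitesimal factors such as $\mu_p$ or $\alpha_p$ one uses that the Frobenius is surjective on $k$, so that the relevant $\Hl^1$ still vanishes, and the general affine case is obtained by writing $H$ as a cofiltered limit of its affine algebraic quotients. The point of the argument is that we route the finite-type restriction through $k$ rather than through $\Fq$, while Theorem \ref{thm:the proper case2}, case (3), absorbs the arbitrary affine stack $\stM=\stG$; this is exactly what removes the finite-type hypothesis present in the classical statement.
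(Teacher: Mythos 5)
Your overall strategy matches the paper's: both apply Theorem \ref{thm:the proper case2}, (3) with $\stX=\Spec\Fq$ and reduce to exhibiting an object of $\stM(\drin(\Fq))$. The gap is in how you produce that object. You assert that $\stG_k$ is a neutral gerbe over the algebraically closed field $k$ and that every $H$-torsor over $k$ is trivial, so that $\stG(k)$ is nonempty with a single isomorphism class and some isomorphism $\phi_k^*x\to x$ exists. For an affine band $H$ that is not of finite type this is precisely the hard (and, as far as the paper is concerned, unavailable) input: a gerbe is only locally nonempty and its objects are only locally isomorphic for the fpqc topology, and an fpqc torsor under a non-algebraic affine group scheme over $k$ is merely a nonzero affine $k$-scheme, which need not have a $k$-point (no Nullstellensatz outside finite type; compare $\Spec k(t)$). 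Your proposed reduction, writing $H$ as a cofiltered limit of its algebraic quotients, does not close this gap: even if each pushed-forward torsor under an algebraic quotient $H_i$ is trivial, the set of $k$-points of the original torsor is an inverse limit of nonempty torsors under the groups $H_i(k)$, and this limit can be empty when the maps $H(k)\to H_i(k)$ fail to be surjective --- exactly the kind of failure for non-finite-type groups illustrated by the paper's Example \ref{Counterexample to Lang for not of finite type}. Note also that even $\stG(k)\neq\emptyset$ for the initially fixed $k$ is not automatic for the same reason.

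The paper sidesteps all of this by exploiting that Theorem \ref{thm:the proper case2} holds for an arbitrary algebraically closed field $k\supseteq\Fq$: one first chooses $k$ large enough that $\stM(k)\neq\emptyset$ (an object exists over some nonzero $\Fq$-algebra, hence over some algebraically closed field), and then, since $\xi$ and $\phi_k^*\xi$ become isomorphic after a further faithfully flat extension, one enlarges $k$ again so that they are isomorphic over $k$ itself. No neutrality of $\stG_k$ and no vanishing of $\Hl^1(k,H)$ is ever needed. If you replace your second paragraph with this enlargement argument, the proof is complete.
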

\begin{proof}
 We apply Theorem \ref{special X} with $\stX=\Spec \Fq$ and $\stM$ an affine gerbe. Choose an algebraically closed field $k$ such that there is an object $\xi\in \stM(k)$. It could be that $\xi$ and $\phi_k^*\xi$ are not isomorphic, but they surely become so enlarging the field $k$. Thus we get an object of $\stM(\drin(\Fq))\simeq \stM(\Fq)$.
\end{proof}

\subsection{Drinfeld's Lemma} Let $\sX$ be any connected algebraic stack over $\F_q$ with a geometric point $x$. According to \cite[\nopp\S 4]{Noohi04}, the category $\FEt(\sX)$ is a Galois category, and we denote by $\pi_1^\et(\sX,x)$ its étale fundamental group. An immediate consequence of Theorem \ref{etale separated thm} is that the pullback functor induces an equivalence
\(
    \FEt(\sX)\xlongrightarrow{\simeq} \FEt(\drin(\sX)).
\) This implies:
\begin{cor}\label{permanence}
    If $\sX$ is a connected algebraic stack over $\F_q$ with a geometric point $x$, then for any algebraically closed field $k$ containing $\F_q$, and for any geometric point $y$ of $\drin(\sX)$ lifting $x$, there is a natural isomorphism \[\pi_1^\et(\drin(\sX),y)\cong \pi_1^\et(\sX,x)\]
\end{cor}

Note that by Theorem \ref{the case of closed immersion}, the clopen subspaces of $\drin(\sX)$ is in one-to-one correspondence with the clopen subspaces of $\sX$. Thus that $\sX$ is connected implies that $\drin(\sX)$ is connected. This justifies the notations in Theorem \ref{permanence}.

In the language of \cite[Lecture 16]{SW20}, one can say that the space $\drin(\sX)$ satisfies \textit{permanence of $\pi_1$} under algebraically base field extensions. Starting from this, there is a standardized procedure (cf.~\cite[\nopp Lecture 16]{SW20}, \cite[\nopp \S 4.2]{Ked17}) to conclude the following \emph{Drinfeld-Lau descent}.

\begin{thm}[Drinfeld's Lemma]
    Let $\sX_1,\sX_2,\dots,\sX_n$ be connected algebraic stacks over $\F_q$, and set $\sX\coloneqq \sX_1\times_{\F_q}\sX_2\times_{\F_q}\cdots\times_{\F_q}\sX_n$. Choose a geometric point $x$ in $\sX$, and denote $x_i$ the image of $x$ in $\sX$. Let $\phi_{i}$ be the $q$-th relative Frobenius of $\sX_i$. Then one obtains the category $\FEt(\sX/\Phi)$ of finite étale covers of $\sX$ equipped with partial Frobenius actions just as in {\rm \cite[Def. 16.2.1]{SW20}}. Then $\FEt(\sX/\Phi)$ is a Galois category, and the base change along $x$ functor induces a fiber functor. Moreover, the natural map
    \[
\pi_1^\et(\sX/\Phi,x)\longrightarrow \pi_1^\et(\sX_1,x_1)\times\cdots\times\pi_1^\et(\sX_n,x_n)
    \] is an isomorphism.
\end{thm}

A detailed proof of this result will be long, so we'll move it to a separated paper.

\subsection{The pro-étale fundamental group}\label{pro-étale}

In \cite{BS15}, B.~Bhatt and P.~Scholze introduced the notion of \emph{geometric covers} of a scheme $X$. A map of schemes $f\colon Y\to X$ is a geometric cover if it is étale and satisfies the valuative criterion for properness. A geometric cover is a finite étale cover if and only if it is quasi-compact. B.~Bhatt and P.~Scholze showed that if $X$ is a topologically locally Noetherian scheme with a geometric point $x$, then the category $\Cov(X)$ of geometric covers of $X$ together with the base change along $x$ functor form an infinite Galois category, therefore, they correspond, via the infinite Galois theory, to a Noohi group $\pi_1^\pet(X,x)$ which is called the pro-étale fundamental group of $X$ at $x$.

Here we want to point out that geometric covers (resp. pro-étale fundamental groups) do not satisfy Drinfeld-Lau descent (resp. Drinfeld's lemma) even for the simplest $\F_q$-scheme $X=\Spec(\F_q)$. Indeed, since both the étale maps and the valuative criterion satisfy fpqc descent (or more generally, universally submersive descent, cf.~\cite{HS23}), the geometric covers satisfies fpqc descent (resp. universally submersive descent). This implies that $\Cov(\drin(\F_q))$ is equivalent to the category consisting of objects of $\Cov(k)$ equipped with $\Z$-actions. However, since $k$ is algebraically closed, $\Cov(k)$ is nothing but the category of sets. By the infinite Galois category theory, the pro-étale fundamental group $\pi_1^\pet(\drin(\F_q))=\Z$, which is not equal to the fundamental group $\pi_1^{\pet}(\F_q)=\pi_1^\et(\F_q)=\hat{\Z}$.        

\printbibliography
\end{document}